\newtheorem{theorem}{Theorem}[section]
\newtheorem{lemma}[theorem]{Lemma}
\newtheorem{definition}[theorem]{Definition}
\newtheorem{corollary}[theorem]{Corollary}
\newtheorem{proposition}[theorem]{Proposition}
\newtheorem{remark}[theorem]{Remark}
\newtheorem{scholium}[theorem]{Scholium}
\newcommand{\Hom}{{\mathrm{Hom}}}
\newcommand{\eins}{\leavevmode\hbox{\small1\kern-3.8pt\normalsize1}}
\newcommand{\minus}{\scalebox{0.9}{{\rm -}}}
\newcommand{\plus}{\scalebox{0.6}{{\rm+}}}
\newcommand{\mR}{\mathbb{R}}
\newcommand{\mC}{\mathbb{C}}
\newcommand{\mN}{\mathbb{N}}
\newcommand{\mE}{\mathbb{E}}
\newcommand{\mZ}{\mathbb{Z}}
\newcommand{\mQ}{\mathbb{Q}}
\newcommand{\mK}{\mathbb{K}}
\newcommand{\mA}{\mathbb{A}}
\newcommand{\mg}{\mathfrak{g}}
\newcommand{\mk}{\mathfrak{k}}
\newcommand{\Span}{{\mathrm{Span}}}
\newcommand{\fg}{{\mathfrak g}}
\newcommand{\fh}{{\mathfrak h}}
\newcommand{\fk}{{\mathfrak k}}
\newcommand{\cD}{\mathcal{D}}
\newcommand{\cP}{\mathcal{P}}
\newcommand{\cC}{\mathcal{C}}
\newcommand{\cO}{\mathcal{O}}
\newcommand{\cB}{\mathcal{B}}
\newcommand{\cZ}{\mathcal{Z}}
\newcommand{\oa}{\bar{0}}
\newcommand{\ob}{\bar{1}}
\newcommand{\End}{{\rm End}}
\newcommand{\Tr}{{\rm Tr}}
\newcommand{\id}{{\rm id}}
\newcommand{\ad}{{\rm ad}}
\newcommand{\epa}{{\epsilon_1}}
\newcommand{\epb}{{\epsilon_2}}
\newcommand{\epc}{{\epsilon_3}}
\newcommand{\dea}{{\delta_1}}
\newcommand{\deb}{{\delta_2}}
\newcommand{\dec}{{\delta_3}}
\newcommand{\wA}{\mathscr{A}}
\newcommand{\wD}{\mathscr{D}}
\DeclarePairedDelimiter\abs{\lvert}{\rvert}%
\DeclarePairedDelimiter\norm{\lVert}{\rVert}%
\let\oldabs\abs
\def\abs{\@ifstar{\oldabs}{\oldabs*}}
\let\oldnorm\norm
\def\norm{\@ifstar{\oldnorm}{\oldnorm*}}
\begin{document}

\title[Polynomial realisations of Lie superalgebras.]{Polynomial realisations of Lie (super)algebras and Bessel operators}

\author{Sigiswald Barbier and Kevin Coulembier}
\date{}

\begin{abstract}
We study realisations of Lie (super)algebras in Weyl (super)algebras and connections with minimal representations. The main result
is the construction of small realisations of Lie superalgebras, which we apply for two distinct purposes. Firstly it naturally introduces, and generalises,
the Bessel operators for Jordan algebras in the study of minimal representations of simple Lie groups. Secondly, we work out the theoretical realisation
concretely for the exceptional Lie superalgebra $D(2,1;\alpha)$, giving a useful hands-on realisation.
\end{abstract}

\maketitle

\noindent
\textbf{MSC 2010 :}   17B35, 17C99, 16S32, 58C50

\noindent
\textbf{Keywords :} Polynomial realisations, Lie superalgebras, Jordan superpairs, Bessel operators, minimal representations
\section{Introduction}

Consider a complex simple Lie algebra $\fg$. In \cite{JosephReal}, Joseph determined the minimal natural number $n=n_{\wD}(\fg)$ for which $\fg$ can be embedded in $\mathscr{D}_n$, where~$\mathscr{D}_n$ is a canonically defined left quotient field of $\wA_n$, the Weyl algebra of differential operators on $\mR^n$ with complex polynomial coefficients. This embedding extends to a morphism from the universal enveloping algebra $U(\fg)$ to $\mathscr{D}_n$ which has kernel $J_0$, known as the {\em Joseph ideal}.

According to Definition 4.6 of Gan and Savin in~\cite{GS}, a unitary representation of a simple real Lie group $G$ is called {\em minimal} if the annihilator ideal of the derived representation of the universal enveloping algebra of Lie$(G)_{\mC}$ is the Joseph ideal. In particular, minimal representations attain the minimal Gelfand-Kirillov dimension amongst all unitary representations of~$G$.

In \cite{Dib}, Dib introduced a second order differential operator on Jordan algebras, called the {\em Bessel operator}, yielding a system of differential equations generalising the Bessel differential equation. In \cite{BesselOperators}, Hilgert, Kobayashi and M\"ollers obtained a unifying construction of ``small" unitary representations, in particular minimal ones, of a large class of real simple Lie groups~$G$, by using the Bessel operator on the Jordan algebra $V$, linked to $G$ by the Tits-Kantor-Koecher (TKK) construction, as one of the main tools. This {\rm TKK} construction provides a correspondence between certain Lie algebras equipped with three-term $\mZ$-gradation and Jordan algebras. Under this correspondence ${\rm Lie}(G)$ is equal to the ``conformal algebra'' $\mathfrak{co}(V)$. The results in \cite{BesselOperators} fit into a large project, studying various properties of minimal representations using Bessel operators, on which remarkable progress has been made during the last decade. Bessel operators, also referred to as fundamental differential operators, appeared earlier in the study of specific examples of minimal representations in e.g.~\cite{Ko11, Opq, KM1, KM2, Sahi}. Some recent further results on Bessel operators can be found in \cite{HKMO, Mollers2, KoSurvey}.

The purpose of the current paper is threefold:
\begin{enumerate}
\item Finding a natural constructive way to introduce Bessel operators in the study of minimal representations of Lie groups.
\item Starting the systematic study of the analogue of the Joseph ideal and minimal representations for Lie {\em superalgebras}, extending the specific cases already considered in~\cite{CSS}.
\item Finding compact explicit realisations of the exceptional Lie superalgebras $D(2,1;\alpha)$, $F(4)$ and $G(3)$.
\end{enumerate}

To achieve part (1) we will work out explicitly certain realisations of $\fg$, for any given three-term $\mZ$-gradation, in a Weyl algebra. The existence of this realisation, guaranteed by general arguments in~\cite{Conze}, was used in~\cite{JosephReal} to obtain an upper bound on $n_{\wD}(\fg)$. We will prove that this realisation is a generalisation of the representation of $\mathfrak{co}(V)$ in~\cite{BesselOperators} to the setting of Jordan {\em pairs}, from the specific case of {\em simple unital} Jordan {\em algebras}. This construction makes a new direct link between \cite{JosephReal} and \cite{BesselOperators}, which helps to further explain, from a different perspective, {\em why} the Bessel operators are so useful in the construction of minimal representations. At the same time, it now follows by construction that the Bessel operators lead to a representation of $\mathfrak{co}(V)$. It is an interesting open question in which generality the techniques in~\cite{BesselOperators, KM2} can be extended from simple unital Jordan algebras to Jordan pairs.

Because of the goal in part (2), we carry out the construction of (1) immediately for the case where~$\fg$ is a Lie superalgebra. For this, we need to generalise some technical results of \cite{Berezin, Conze}, concerning universal enveloping algebras, to the case of ($\mZ$-graded) superalgebras. In particular, we obtain a construction of small polynomial realisations for 3-graded Lie superalgebras, which will be the starting point of the study of an interesting class of representations of Lie supergroups in subsequent work, following the spirit of \cite{BesselOperators}. 

The small polynomial realisations of Lie superalgebras have yet another application, as mentioned in aim (3). The exceptional simple basic classical Lie superalgebras $D(2,1;\alpha)$, $F(4)$ and $G(3)$, see e.g. \cite{Musson}, do not admit small dimensional representations, like the ones for the families $\mathfrak{osp}$ and $\mathfrak{sl}$. Therefore, there are no convenient matrix realisations available. Concretely, for the one parameter family $D(2,1;\alpha)$ of deformations of $\mathfrak{osp}(4|2)$, the smallest representation (the adjoint representation) of $D(2,1;\alpha)$ is 17-dimensional as soon as $\alpha\not\in\mQ$, see \cite{Joris}. This is in sharp contrast with the undeformed superalgebra $\mathfrak{osp}(4|2)$, which has a 6-dimensional representation, {\it viz.} the natural representation. So for generic $D(2,1;\alpha)$, contrary to $\mathfrak{osp}(4|2)$, the smallest matrix realisation is not convenient.

We apply our results to derive which convenient polynomial realisations exist of the exceptional Lie superalgebras and work them out very explicitly for the case $D(2,1;\alpha)$. For every fixed parameter $\alpha$, this yields a one parameter family of realisations, as polynomial differential operators on $2|2$-dimensional superspace. We also determine when the corresponding representation on polynomials is irreducible, revealing information on the expected structure of orbits on which representations can be constructed using the methods of \cite{BesselOperators} and a candidate for the minimal representation.

The paper is organised as follows. In Section \ref{secprel} we gather necessary preliminaries on superalgebras and Jordan (super)pairs. In Section \ref{Constr} we study the universal enveloping algebra of Lie superalgebras and use this to construct useful embeddings of Lie superalgebras in (completions of) Weyl superalgebras. In Section \ref{secBessel} we use a specific example of the aforementioned realisations, in the case of a 3-graded Lie superalgebra, to define the Bessel operators for the associated Jordan superpair. We show that this generalises the known Bessel operators for unital Jordan algebras. In Section \ref{secD} we use the results of Section \ref{Constr} to construct a compact explicit realisation of the Lie superalgebras $D(2,1;\alpha)$. In Appendix \ref{technical} we carry out some technical calculations concerning enveloping algebras, which are essential for the construction in Section \ref{Constr}.


\section{Preliminaries}\label{secprel}

We refer to \cite{CW, Musson} for a general introduction to (Lie) superalgebras and to \cite{DM} for an introduction to supermanifolds. We will always work over the field $\mK$ which is either $\mR$ or $\mC$. We set $\mZ_2=\mZ/2\mZ=\{\oa,\ob\}$ and set $|a|=\oa$, respectively $|a|=\ob$, for $a$ an even, respectively odd, element of some $\mK$-super vector space. Unless specified otherwise, Lie superalgebras and Jordan superalgebras are assumed to be finite dimensional. In our convention $0$ is a natural number, so $\mN=\{0,1,2,\cdots\}$.

\subsection{Lie superalgebras and symmetrisation}\label{PrelSec1}
Let $A$ be an associative $\mK$-superalgebra and let $S(A)$ be the supersymmetric algebra of the super vector space $A$. To distinguish between multiplication in $A$ and $S(A)$, we denote the product of two elements $\alpha,\beta\in S(A)$ by $\alpha \bullet \beta=(-1)^{\abs{\alpha}\abs{\beta}}\beta\bullet\alpha$. So, for $\alpha,\beta\in A$, we always have $\alpha\bullet\beta\not=\alpha\beta$, as both sides live in different spaces.

Consider the symmetrisation map $\sigma$  from $S(A)$ to $A$. On elements of the form $a_1\bullet\cdots \bullet a_{p+q}$ with homogeneous $a_i\in A$, ordered such that $a_i$ is even for $i\leq p$ and odd for $i>p$, we have
\[
\sigma(a_1\bullet\cdots\bullet a_p\bullet a_{p+1} \bullet \cdots \bullet a_{p+q}) = \frac{1}{(p+q)!}\sum_{\tau \in S_{p+q}}(-1)^{\abs{\tau}} a_{\tau(1)} \cdots a_{\tau(p+q)},
\]
where~$S_{p+q}$ is the permutation group of $p+q$ objects and 
\begin{align} \label{sign permutation}
 \abs{\tau } = \sum_{i=p+1}^{p+q-1} \sum_{j=i+1}^{p+q} \left[ \tau^{\minus 1}(i) > \tau^{\minus 1}(j) \right].
\end{align}
Here we used the Iverson bracket
\begin{align*}
[ \text{expression} ] = \begin{cases}
0 \qquad \text{if the expression is false} \\
1 \qquad \text{if the expression is true}.
\end{cases}
\end{align*}

For a $\mK$-Lie superalgebra $\fg$ we denote the universal enveloping algebra by $U(\mg)$. The restriction of $\sigma$ from $S(U(\fg))$ to $S(\fg)$ yields a vector space isomoprhism
\begin{equation}\label{sigmaisom}
\sigma\, \colon \;S(\mg)\; \tilde\to \;U(\mg).
\end{equation}
In the multiplication on $S(\fg)$ we will leave out $\bullet$ as there is no ambiguity.

A $\mZ$-grading of a Lie superalgebra $\fg$ is a decomposition as super vector spaces
\begin{equation}\label{Zgrad}
\mg=\bigoplus_{i \in \mZ} \mg_{i} ,
\end{equation}
where~$[\mg_{i},\mg_{j}]\subset \mg_{i+j}.$  For such a grading we put $\mg_{\plus} = \bigoplus_{i >0} \mg_i,$  $ \mg_{\minus}=\bigoplus_{i <0} \mg_{i}$ and $\mk=\mg_0 \oplus \mg_{\plus}$. When $\fg_{\plus}=\fg_1$ and $\fg_{\minus}=\fg_{\minus 1}$, we say that $\fg$ is $3$-graded. The grading in \eqref{Zgrad} is not to be confused with the $\mZ_2$-grading $\fg=\fg_{\oa}\oplus\fg_{\ob}$.

 By a character of a $\mK$-Lie superalgebra we mean an (even) Lie superalgebra morphism to $\mK$.  Note that we can extend any character $\lambda$ of $\fg_0$, in the notation of \eqref{Zgrad}, to a character $\lambda:\fk\to \mK$ by setting $\lambda(\fg_{\plus})=0$. We will often silently make this identification. Moreover, for a character $\lambda$ of $\fg_0$ we consider the 1-dimensional $\fk$-module $\mK_\lambda$, where~$Xv=\lambda(X)v$ for all $X\in \fk$ and $v\in \mK_\lambda$.

\subsection{Some conventions}\label{somconv}
We will use~$(m,n)$-multiple indices, which we define as
$$
\mN^{m|n}= \mN^m \times \{0,1\}^n, \quad
K=(k_1,k_2,\ldots, k_m | k_{m+1},\ldots, k_{m+n}) \in \mN^{m|n} $$
and for which we introduce the notation 
$$\abs{K}= \sum_{i=1}^{m+n} k_i\;\;\quad\mbox{and} \qquad
K!=k_1! \cdots k_m!.$$
For $K,L\in \mN^{m|n}$, we say that $L < K$ if $l_i \leq k_i$ for $i=1,\ldots, m+n$ and $L\not= K$.

The Bernoulli numbers $B_i$ are recursively defined by \cite{Weisstein} \begin{align}\label{Bernoulli numbers}
B_i = -\sum_{j=0}^{i-1} \binom{i}{j} \frac{B_j}{i-j+1}\mbox{ for }i>1,\; B_0=1, \mbox{ and }\; B_1=-1/2.
\end{align} 

We will use the convention 
$$\ad_Z(Y):=[Z,Y]\quad\mbox{and}\quad \widetilde{\ad}_Z(Y):=[Y,Z],$$
where~$[\cdot,\cdot]$ denotes the Lie superbracket in case~$Y$ and $Z$ are elements of a Lie superalgebra, or the supercommutator in case~$Y$ and $Z$ are elements of an associative superalgebra.


For an $\mR$-super vector space $U$, we introduce the affine superspace 
$$\mA(U):=(U_{\oa},\cC^\infty_{U_{\oa}}\otimes \Lambda U_{\ob}^\ast),$$
which is a real supermanifold. In particular we set $\mA^{n|m}:=\mA(\mR^{n|m})$.

A super derivation $D$ on a $\mK$-superalgebra $A$ is some $D\in \End_{\mK}(A)$ satisfying the super Leibniz rule. Concretely, the super derivations are the linear combinations of homogenous $D\in \End_{\mK}(A)$ satisfying
$$D(ab)=D(a)b+(-1)^{|D||a|}a D(b)$$
for all $a,b\in A$, with $a$ homogeneous. Let $V$ be a finite dimensional super vector space and $X_1,\ldots, X_{m+n}$ a basis for $V,$ where~$X_i$ is even for $i\leq m$ and odd for $i>m$. We define the partial derivatives $\partial^i$, as the unique derivation on the superalgebra $S(V)$ satisfying $\partial^i X_j= \delta_{ij}$.
They satisfy the relations $ \partial^i\partial^j=(-1)^{\abs{X_i}\abs{X_j}} \partial^j\partial^i$ and hence canonically generate $S(V^\ast)$.
We define a basis of $S(V^\ast)$ by
$$\partial^K  = (\partial^1)^{k_1} \cdots (\partial^{m+n})^{k_{m+n}} \quad\mbox{ with }\;\quad K\in \mN^{m|n}.$$

On a supermanifold $M$, the sheaf of functions, respectively of differential operators, is denoted by~$\cO_M$, respectively $\cD_M$. The global sections are denoted by $\Gamma(\cO_M)$ and $\Gamma(\cD_M)$. In case~$M=\mA(U)$, for a real super vector space $U$, and for a homogeneous basis $X_j\in U^\ast$, the partial derivatives $\partial^j$ defined in the above paragraph are elements of $\Gamma(\cD_{\mA(U)})$. Together with the elements of $\cO_{\mA(U)}$ they generate~$\cD_{\mA(U)}$.

\subsection{Polynomial realisations}

Consider a finite dimensional $\mK$-super vector space $V$. We define the (super) Weyl algebra, also known as the Weyl-Clifford algebra, $\wA(V)$ as the $\mK$-subalgebra of $\End_\mK(S(V))$ generated by multiplication with elements of $V$ and the super derivations on the algebra $S(V)$. In particular we have a natural identification of super vector spaces
\begin{equation}\label{ASS}\wA(V)\;\cong\; S(V)\otimes S(V^\ast)\;\,\subset\, \End(S(V)),\end{equation}
where~$V^\ast$ is interpreted as the space spanned by the partial derivatives. When we take $V=\mK^{n|m}$ we denote this by $\wA_{n|m}(\mK)=\wA(\mK^{n|m})$. We will consider $\wA(V)$ both as an associative algebra and as an infinite dimensional Lie superalgebra with bracket given by the super commutator. Note that we have a canonical embedding of $\wA(V)$ into $\Gamma(\cD_{\mA(V^\ast)})$ for $V$ real.

We define a polynomial realisation of a $\mK$-Lie superalgebra $\fg$ to be an injective Lie superalgebra morphism 
$\phi:\fg\hookrightarrow \wA_{n|m}(\mK)$ for some $n,m\in\mN$. Note that if $\fg$ admits a faithful representation on a finite dimensional vector space $V$ then there is automatically a realisation in $\wA(V)$, contained in $V\otimes V^\ast$ under \eqref{ASS}. This is referred to as a matrix realisation. 

Clearly the canonical representation of $\wA(V)$ on $S(V)$ is faithful. This implies that for any associative algebra $A$ with algebra morphism $\phi:A\to \wA(V)$, the annihilator ideal in $A$ of the induced representation on $S(V)$ is given by the kernel of $\phi$.

We consider the usual topology on $S(V)$, where open neighbourhoods are given by powers of the maximal ideal cancelling $0$. We define $\widehat{S}(V^\ast)\subset \End(S(V))$ as the completion of $S(V)$. Finally we denote the subalgebra of $\End(S(V))$ generated by $\wA(V)$ and $\widehat{S}(V^\ast)$ by $\widehat{\wA}(V)$.



\subsection{Jordan superpairs and superalgebras}

\subsubsection{Jordan superpairs}\label{subsecJSP}
A $\mK$-Jordan superpair $V$ is a pair of $\mK$-super vector spaces $(V_{\plus}, V_{\minus})$ equipped with two even $\mK$-trilinear products, known as the Jordan triple products,  \[ 
\{ \cdot, \cdot, \cdot \}^{\sigma} \colon V_{\sigma} \times V_{-\sigma} \times V_{\sigma} \to V_{\sigma},
\]
$\sigma \in \{+,-\}$, which satisfy symmetry in the outer variables
\begin{align*}
\{x,y,z\}^\sigma &= (-1)^{\abs{x}\abs{y}+\abs{y}\abs{z} + \abs{z}\abs{x}} \{z,y,x\}^\sigma,
\end{align*}
and the  $5$-linear identity
\begin{align*}
\{ x,y, \{ u,v,w \}^\sigma \}^\sigma  &- \{ \{ x,y,u\}^\sigma ,v,w\}^\sigma \\&=(-1)^{(\abs{x}+\abs{y})(\abs{u}+\abs{v})} (-\{u	,\{v,x,y\}^\sigma,w\}^\sigma + \{u,v,\{x,y,w\}^\sigma  \}^\sigma),
\end{align*}
for homogeneous $x,z,u,w \in V_\sigma$ and $y,v \in V_{-\sigma}$.

Define the following operators
\[
D^\sigma \colon V_\sigma \times V_{-\sigma} \to \End (V_\sigma); \qquad (x,y) \mapsto D_{x,y}^\sigma,
\]
where~$D^{\sigma}_{x,y}(z) = \{x,y,z\}^\sigma.$ We can rewrite the $5$-linear identity with these operators as 
\begin{align} \label{5 linear operator identity}
[D^\sigma_{x,y},D^\sigma_{u,v}]&=D^\sigma_{\{x,y,u\},v} -(-1)^{(\abs{x}+\abs{y})(\abs{u}+\abs{v})}D^\sigma_{u,\{v,x,y\}} \\
&=D^\sigma_{x,\{y,u,v\}} -(-1)^{(\abs{x}+\abs{y})(\abs{u}+\abs{v})} D^\sigma_{\{u,v,x\},y}. \nonumber
\end{align}
We also introduce the operators 
\begin{align}\nonumber
&P^\sigma \colon V_\sigma \times V_{\sigma} \to {\Hom}(V_{-\sigma},V_\sigma); \qquad (x,y) \mapsto P_{x,y}^\sigma,\\\nonumber
&P^\sigma_{x,y} (z) := (-1)^{\abs{y}\abs{z}} \frac{1}{2}D^\sigma_{x,z}(y)\qquad\mbox{for }x,y\in V_\sigma \mbox{ and }z\in V_{-\sigma}.
\end{align}
In the following we will omit $\sigma$ from the notation, as the upper index of $D_{x,y}$ and $P_{x,y}$ is determined by $x$ and $y$, and similarly for $\{\cdot,\cdot,\cdot\}$.

The simple finite dimensional Jordan superpairs over an algebraic closed field of zero characteristic were classified in \cite{Krutelevich}.

\subsubsection{The Tits-Kantor-Koecher construction}
For a Jordan superpair we define the structure algebra $\mathfrak{str}(V)$ as the algebra spanned by $D_{x,u}$ for $x \in V_{\plus}$ and $u \in V_{\minus}$ and with the supercommutator as bracket. From \eqref{5 linear operator identity} it follows that $\mathfrak{str}(V)$ is a well-defined subalgebra of $\mathfrak{gl}(V_{\plus})$.

We can associate a 3-graded Lie superalgebra ${\rm TKK}(V)$ to the Jordan superpair $V$ in the following way, see section 2 in \cite{Krutelevich}.
As vector spaces we have \begin{equation}\label{TKKV}{\rm TKK}(V)= V_{\plus} \oplus \mathfrak{str}(V) \oplus V_{\minus}.\end{equation}
The Lie bracket on ${\rm TKK}(V)$ is defined as follows:
\begin{align*}
[x,u]&=D_{x,u} \\
[D_{x,u},y]&=\{x,u,y\}=D_{x,u}(y) \\
[u,D_{x,v}]&=\{ u,x,v\}=-(-1)^{(\abs{x}+\abs{v})\abs{u}}D^*_{x,v}(u) \\
[D_{x,u},D_{y,v}]&=D_{\{x,u,y\},v} - (-1)^{(\abs{x}+\abs{u})(\abs{y}+\abs{v})} D_{y,\{v,x,u\}} \\
[x,y]&=[u,v]=0,
\end{align*}
for $x,y \in V^+$, $u,v \in V^-$ and extended linearly and anti-commutatively.
Here we introduced $D_{x,v}^\ast= -(-1)^{\abs{x}\abs{v}} D_{v,x}$.

Conversely, with each finite dimensional 3-graded Lie superalgebra $\mg=\mg_{\plus 1} \oplus \mg_0 \oplus \mg_{\minus 1}$
 we can associate a  Jordan superpair by $\mathcal{J}(\mg)= (\mg_{\plus 1},\mg_{\minus 1})$ with Jordan triple product defined as
\[
\{x^\sigma,y^{-\sigma},z^\sigma \} = [[x^\sigma,y^{-\sigma}],z^\sigma].
\]

\begin{definition}
A 3-graded Lie superalgebra $\mg=\mg_{\plus 1} \oplus \mg_0 \oplus \mg_{\minus 1}$ is called \textbf{Jordan graded} if 
\[
[\mg_{\plus 1},\mg_{\minus 1}]=\mg_0 \text{ and } \mg_0 \cap Z(\mg)=0.
\]
\end{definition}
We have the following, by Lemmata 4, 5 and 6 in~\cite{Krutelevich}.
\begin{proposition}\label{TKK}
Let $\mg$ be a finite dimensional Jordan graded Lie superalgebra, then
\begin{itemize}
\item ${\rm TKK}(\mathcal{J}(\mg))$ is isomorphic to $\mg$.
\item $\mathcal{J}(\mg)$ is simple if and only if $\mg$ is simple.
\end{itemize}
Let $V$ be any finite dimensional Jordan superpair, then $\mathcal{J} ({\rm TKK}(V))$ is isomorphic to $V$.
\end{proposition}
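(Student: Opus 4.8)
The plan is to treat the three assertions separately, the last being almost a tautology while the first two amount to matching the degree-zero part of a $3$-graded Lie superalgebra with the structure algebra of its Jordan superpair. Throughout I work with grading-preserving maps, so that everything reduces to checks on the three homogeneous pieces.

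I would dispose of the last claim first. By the definition \eqref{TKKV} the degree $\plus 1$ and degree $\minus 1$ components of $\mathrm{TKK}(V)$ are literally $V_{\plus}$ and $V_{\minus}$, so $\mathcal{J}(\mathrm{TKK}(V)) = (V_{\plus},V_{\minus})$ already agrees with $V$ as a pair of super vector spaces and only the triple products must be compared. Using the defining brackets of $\mathrm{TKK}(V)$, for $x,z\in V_{\plus}$ and $y\in V_{\minus}$ one has $[x,y]=D_{x,y}$ and $[D_{x,y},z]=\{x,y,z\}$, so the triple product $[[x,y],z]$ of $\mathcal{J}(\mathrm{TKK}(V))$ returns the original product on $V_{\plus}$; the product on $V_{\minus}$ is recovered identically via the relation for $[y,D_{\cdot,\cdot}]$ and the definition of $D^\ast$. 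This is a direct sign-chase and yields the isomorphism.

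For $\mathrm{TKK}(\mathcal{J}(\mg))\cong\mg$ I would build the grading-preserving map $\Phi\colon\mg\to\mathrm{TKK}(\mathcal{J}(\mg))$ that is the identity on $\mg_{\plus 1}$ and $\mg_{\minus 1}$ and on $\mg_0$ is $\Psi(Z)=\ad_Z|_{\mg_{\plus 1}}\in\mathfrak{str}(\mathcal{J}(\mg))$. Since the triple product of $\mathcal{J}(\mg)$ is $\{x,u,z\}=[[x,u],z]$, the generator $D_{x,u}$ equals $\ad_{[x,u]}|_{\mg_{\plus 1}}=\Psi([x,u])$, so $\Psi$ is surjective exactly because $[\mg_{\plus 1},\mg_{\minus 1}]=\mg_0$ by Jordan gradedness. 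That $\Phi$ respects brackets is checked against the defining relations of $\mathrm{TKK}$ using the super Jacobi identity: the relations $[x,u]=D_{x,u}$, $[\Psi(Z),x]=\ad_Z(x)$ and $[\Psi(Z),\Psi(W)]=\ad_{[Z,W]}|_{\mg_{\plus 1}}$ are immediate, and the only delicate relation is the one landing in $\mg_{\minus 1}$, which forces the action of $\mathfrak{str}(\mathcal{J}(\mg))$ on $V_{\minus}$ through the $D^\ast$ to agree with $\ad_Z|_{\mg_{\minus 1}}$. The remaining point is injectivity of $\Psi$, whose kernel is $\{Z\in\mg_0:[Z,\mg_{\plus 1}]=0\}$, and which I would identify with $Z(\mg)\cap\mg_0=0$.

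The main obstacle is precisely this injectivity. Writing $Z=\sum_i[x_i,u_i]$, the hypothesis $[Z,\mg_{\plus 1}]=0$ says $\sum_i D_{x_i,u_i}=0$ in $\mathfrak{gl}(\mg_{\plus 1})$, whereas $\ad_Z|_{\mg_{\minus 1}}$ equals, up to sign, $\sum_i D^\ast_{x_i,u_i}$; so what must be upgraded is ``$[Z,\mg_{\plus 1}]=0$'' to ``$[Z,\mg_{\minus 1}]=0$'', after which the super Jacobi identity gives $[Z,[\mg_{\plus 1},\mg_{\minus 1}]]=[Z,\mg_0]=0$ and hence $Z\in Z(\mg)\cap\mg_0=0$. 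This upgrade is exactly the assertion that the action of $\mathfrak{str}(V)$ on $V_{\minus}$ via the $D^\ast$ is well defined, i.e. depends only on the operator $\sum_i D_{x_i,u_i}$ on $V_{\plus}$; this is part of what makes $\mathrm{TKK}(V)$ a genuine Lie superalgebra and follows from the symmetry of the triple product together with the $5$-linear identity \eqref{5 linear operator identity}. Granting it, both the troublesome bracket relation and the injectivity follow, completing this part.

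Finally, for the simplicity statement I would set up an inclusion-preserving bijection between graded ideals of $\mg$ and ideals of $\mathcal{J}(\mg)$: a graded ideal $\mathfrak{i}=\mathfrak{i}_{\plus 1}\oplus\mathfrak{i}_0\oplus\mathfrak{i}_{\minus 1}$ restricts to the ideal $(\mathfrak{i}_{\plus 1},\mathfrak{i}_{\minus 1})$ of $\mathcal{J}(\mg)$ because the triple product is a double bracket, while an ideal $(I_{\plus},I_{\minus})$ is completed by $I_0:=[I_{\plus},\mg_{\minus 1}]+[\mg_{\plus 1},I_{\minus}]$ to a graded ideal, the Jordan-pair axioms and the ideal axioms translating into one another through the defining brackets. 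Non-vanishing of the products corresponds to $\mg_0=[\mg_{\plus 1},\mg_{\minus 1}]\neq0$, so $\mathcal{J}(\mg)$ is simple iff $\mg$ has no nontrivial graded ideal. The remaining delicate point is the passage from graded simplicity to genuine simplicity: here I would use that $\mg$ is generated by $\mg_{\plus 1}\oplus\mg_{\minus 1}$ (again by $[\mg_{\plus 1},\mg_{\minus 1}]=\mg_0$) to argue that a minimal nonzero ideal must be homogeneous, e.g. by splitting it along the eigenspaces of an inner grading element when one exists and otherwise by a direct minimality argument, so that the presence of any nontrivial ideal produces a nontrivial graded one. Combining the three parts yields the proposition.
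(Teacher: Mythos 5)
First, be aware that the paper does not actually prove this proposition: it is quoted verbatim from Lemmata 4, 5 and 6 of the cited work of Krutelevich, so you are supplying an argument where the paper supplies only a citation. Your architecture is the standard one. The identification $\mathcal{J}({\rm TKK}(V))\cong V$ is indeed essentially tautological; for ${\rm TKK}(\mathcal{J}(\mg))\cong\mg$ your map $\Phi$, the surjectivity of $\Psi$ from $[\mg_{\plus 1},\mg_{\minus 1}]=\mg_0$, and the reduction of injectivity to the implication $[Z,\mg_{\plus 1}]=0\Rightarrow[Z,\mg_{\minus 1}]=0$ followed by $Z\in Z(\mg)\cap\mg_0=0$ are all correct. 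You rightly observe that this implication is exactly the well-definedness of the $\mathfrak{str}(V)$-action on $V_{\minus}$, which is already required for ${\rm TKK}(V)$ to be a Lie superalgebra at all (the bracket $[u,D_{x,v}]$ must depend only on the operator $D_{x,v}\in\mathfrak{gl}(V_{\plus})$), so deferring it is defensible; but note it is not a formal consequence of outer symmetry plus the $5$-linear identity alone — from $\sum_i D_{x_i,u_i}=0$ the identity \eqref{5 linear operator identity} only yields relations of the form $\sum_i D_{y,\{v,x_i,u_i\}}=0$, and the clean statement is normally obtained by working with the pairs $(D_{x,u},-D_{u,x})$ as inner derivations of $V$.

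The genuine gap is in the simplicity equivalence, in the direction ``$\mathcal{J}(\mg)$ simple $\Rightarrow$ $\mg$ simple''. Your correspondence between graded ideals of $\mg$ and ideals of $\mathcal{J}(\mg)$ is fine (together with the remark that a graded ideal concentrated in degree zero lies in $Z(\mg)\cap\mg_0=0$), but the reduction of an arbitrary ideal to a graded one is not established. The mechanism you propose — splitting along the eigenspaces of an inner grading element — is unavailable in general: a Jordan graded Lie superalgebra need not contain $E\in\mg_0$ with $\ad_E=\pm 1$ on $\mg_{\pm 1}$ (such an element exists essentially only in the presence of a short subalgebra as in Subsection \ref{secJalg}, i.e.\ for unital Jordan superalgebras), and ``otherwise by a direct minimality argument'' is a placeholder rather than an argument. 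What is actually needed is to take $0\neq a=a_{\minus}+a_0+a_{\plus}$ in an ideal $\mathfrak{i}$ and extract homogeneous elements by double brackets, e.g. $[[a,u],v]=\pm\{u,a_{\plus},v\}\in\mathfrak{i}\cap\mg_{\minus 1}$ for $u,v\in\mg_{\minus 1}$ and its mirror image, and then to dispose of the degenerate case where all such extractions vanish by showing that the offending components span a Jordan-pair ideal with trivial products, contradicting simplicity of $\mathcal{J}(\mg)$. Without this step the proof of the second bullet is incomplete.
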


\subsubsection{Jordan superalgebras }\label{secJalg}
One can interpret (unital) Jordan superalgebras as a special class of Jordan superpairs.
A unital Jordan $\mK$-superalgebra is a $\mK$-superalgebra $J$ with a $\mK$-bilinear product $\ast$ which satisfies
\begin{itemize}
\item $x \ast y = (-1)^{\abs{x}\abs{y}} y\ast x$ \qquad (commmutativity)
\item $(-1)^{\abs{x}\abs{z}}[L_x,L_{y\ast z}]+(-1)^{\abs{y}\abs{x}}[L_y,L_{z\ast x}]+(-1)^{\abs{z}\abs{y}}[L_z,L_{x\ast y}]=0$ \qquad (Jordan identity).
\end{itemize}
and  containing a unit element $e$. Here $L_x\in \End(J)$ is (left) multiplication with $x\in J$.

Define the following triple product for a unital Jordan algebra $J$: \[ \{ x,y,z \}=2\left(x \ast (y\ast z) +(x \ast y) \ast z -(-1)^{\abs{x}\abs{y}}y \ast (x \ast z) \right). \]

This product satisfies the conditions of a Jordan triple product. 
Therefore $(J,J)$ with this triple product is a Jordan superpair.

The operators $D_{x,y}$ and $P_{x,y}$ for the superpair $(J,J)$, as introduced in Subsection \ref{subsecJSP}, can now be expressed using the left multiplication operator instead of the triple product. We get
\begin{align} \label{expression D for Jordan algebra}
D_{x,y}= 2 L_{x\ast y}+2[L_x,L_y] \qquad \text{ and } \qquad P_{x,y} =L_x L_y +(-1)^{\abs{x}\abs{y}}L_y L_x-L_{x\ast y}.
\end{align}

To establish which Jordan graded Lie superalgebras are coming from a unital Jordan superalgebra under the {\rm TKK}-construction, we need the concept of minimal pairs.
\begin{definition}
A \textbf{short subalgebra} of a Lie superalgebra $\mg$ is an $\mathfrak{sl}(2)$ subalgebra spanned by even elements $e,h,f$ such that 
\begin{itemize}
\item The eigenspace decomposition of $ad(h)$ defines a $3$-grading of $g$.
\item $[h,e]=-e$, $[h,f]=f$, $[e,f]=h$.
\end{itemize}
\end{definition}
For  ${\rm TKK}(J,J)$ and unit $e\in J$, a short subalgebra is given by $e:=(0,0,e), h:=(0,L_e,0)$ and $ f:=-(\frac{e}{2},0,0)$, using the identification in equation \eqref{TKKV}. We denote this subalgebra by $\mathfrak{a}_J$.
\begin{definition}
A $3$-graded Lie superalgebra $\mg=\mg_{\plus 1} \oplus \mg_0 \oplus \mg_{\minus 1}$ is called \textbf{minimal} if any non-trivial ideal $I$ of $\mg$ intersects $\mg_{\minus 1}$ non-trivially.
\end{definition}

We call a Lie algebra $\mg$ with a short subalgebra $\mathfrak{a}$, such that the corresponding $3$-grading is minimal, a minimal pair $(\mg, \mathfrak{a})$. With each minimal pair $(\mg, \mathfrak{a})$ we can associate a unital Jordan superalgebra $J_{\mg, \mathfrak{a}}:=\mg_{\minus 1}$ with product
\[
x\bullet y = [[f,x],y].
\]

\begin{proposition}[Theorem 5.15 in~\cite{Cantarini}]
The functor $\mathcal{F}\colon J \to ({\rm TKK}(J,J), \mathfrak{a}_J)$ is an equivalence of the categories of unital Jordan superalgebras (with epimorphisms as morphisms) to the category of minimal pairs (with as morphisms epimorphisms $\phi$ such that $\phi(\mathfrak{a})=\mathfrak{b}$ for the short subalgebras $\mathfrak{a}, \mathfrak{b}$). The inverse is given by $(\mg, \mathfrak{a}) \to J_{\mg, \mathfrak{a}}$.
\end{proposition}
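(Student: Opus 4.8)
The plan is to prove the equivalence by taking the explicitly given assignment $\mathcal{G}\colon(\mg,\mathfrak a)\mapsto J_{\mg,\mathfrak a}$ as a candidate quasi-inverse and showing that the composites $\mathcal{G}\circ\mathcal{F}$ and $\mathcal{F}\circ\mathcal{G}$ are naturally isomorphic to the identity functors. Because both functors and the object-level identifications will be produced by canonical formulae (the identity on the underlying super vector spaces, respectively the ${\rm TKK}$ isomorphism), naturality with respect to the chosen epimorphisms is essentially automatic once the object-level statements are in place; so the substance lies in the verification on objects. Before comparing composites I would first check that $\mathcal{F}$ and $\mathcal{G}$ land in the correct categories: that $({\rm TKK}(J,J),\mathfrak a_J)$ really is a minimal pair, and that $\mg_{\minus 1}$ equipped with $x\bullet y=[[f,x],y]$ really is a unital Jordan superalgebra.

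For the well-definedness of $\mathcal{F}$ I would verify the short-subalgebra axioms for $e,h,f$ by direct computation from the ${\rm TKK}$ brackets listed after \eqref{TKKV} together with the unit axioms; the only nontrivial relation is $[e,f]=h$, which unwinds to $\tfrac12 D_{e,e}=L_e$ using $e\ast e=e$ and \eqref{expression D for Jordan algebra}, while $\ad(h)$ manifestly reproduces the three-term grading. Minimality of this grading, i.e. that every nonzero ideal $I$ meets $V_{\minus}$, I would obtain from the $\mathfrak{sl}(2)$-module structure supplied by $\mathfrak a_J$: an ideal is $\ad(\mathfrak a_J)$-stable, hence a sum of $\mathfrak{sl}(2)$-submodules, and bracketing a nonzero homogeneous component with $e\in V_{\minus}$ lowers its degree toward $V_{\minus}$; the only delicate case is an $\mathfrak{sl}(2)$-trivial component concentrated in $\mg_0$, which is excluded by the non-degeneracy of the Jordan triple product forced by the presence of the unit. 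For $\mathcal{G}$ I would check that $x\bullet y$ is supercommutative and satisfies the Jordan identity: both reduce, via the super Jacobi identity in $\mg$ and the $\mathfrak{sl}(2)$-relations of $\mathfrak a$, to the operator identities already recorded in \eqref{5 linear operator identity}, with the unit of $J_{\mg,\mathfrak a}$ supplied by $e$.

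The composite $\mathcal{G}\circ\mathcal{F}$ is settled by a direct calculation. By construction ${\rm TKK}(J,J)_{\minus 1}=V_{\minus}=J$ as super vector spaces, so only the products must be matched. Writing $f=-(\tfrac{e}{2},0,0)$ and $x=(0,0,x)$, $y=(0,0,y)$ in the notation of \eqref{TKKV}, the bracket relations give $[f,x]=-\tfrac12 D_{e,x}$ and then $[[f,x],y]=\tfrac12(-1)^{\abs{x}\abs{y}}\{y,e,x\}$. Evaluating $\{y,e,x\}=2\,(y\ast x)$ via the unit and applying supercommutativity of $\ast$ collapses this to $x\bullet y=x\ast y$, so $\mathcal{G}\circ\mathcal{F}$ is the identity on objects; the same formulae show the short subalgebra $\mathfrak a_J$ is respected, giving the identification on morphisms.

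The main obstacle is the other composite $\mathcal{F}\circ\mathcal{G}$, which amounts to reconstructing the entire Lie superalgebra $\mg$ from the single graded piece $\mg_{\minus 1}$ and the short subalgebra. Here I would invoke Proposition~\ref{TKK}: the triple $\mathfrak a$ provides, through $\ad$ of its $\mathfrak{sl}(2)$-elements, a base point identifying the Jordan pair $\mathcal{J}(\mg)=(\mg_{\plus 1},\mg_{\minus 1})$ with the pair $(J_{\mg,\mathfrak a},J_{\mg,\mathfrak a})$ attached to the Jordan algebra $J_{\mg,\mathfrak a}$, so that $\mathcal{F}\circ\mathcal{G}(\mg,\mathfrak a)\cong{\rm TKK}(\mathcal{J}(\mg))$. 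The delicate points, where minimality does the real work, are (a) that the minimality hypothesis forces the Jordan graded conditions $[\mg_{\plus 1},\mg_{\minus 1}]=\mg_0$ and $\mg_0\cap Z(\mg)=0$, so that Proposition~\ref{TKK} actually applies and yields ${\rm TKK}(\mathcal{J}(\mg))\cong\mg$, and (b) that this isomorphism carries $\mathfrak a_{J_{\mg,\mathfrak a}}$ onto $\mathfrak a$. For (a), the condition $\mg_0\cap Z(\mg)=0$ is the subtle one: a nonzero central element of $\mg_0$ would span an ideal disjoint from $\mg_{\minus 1}$, contradicting minimality, and a failure of $[\mg_{\plus 1},\mg_{\minus 1}]=\mg_0$ likewise produces a graded ideal avoiding $\mg_{\minus 1}$. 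Assembling (a) and (b) with the object-level computation for $\mathcal{G}\circ\mathcal{F}$ supplies both natural isomorphisms and hence the equivalence, the restriction to epimorphisms ensuring that the relevant submodule/ideal arguments stay within the chosen morphism classes.
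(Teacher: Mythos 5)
The paper does not prove this proposition at all: it is quoted verbatim from Theorem~5.15 of \cite{Cantarini} (Cantarini--Kac), so there is no in-paper argument to compare yours against. Judged on its own terms, your architecture is the standard (and correct) one, and several of your computations check out: the short-subalgebra relations for $e,h,f$ do reduce to $\tfrac12 D_{e,e}=L_e$, and the composite $\mathcal{G}\circ\mathcal{F}$ is indeed settled by $[f,x]=-\tfrac12 D_{e,x}$, $[[f,x],y]=\tfrac12(-1)^{\abs{x}\abs{y}}\{y,e,x\}=(-1)^{\abs{x}\abs{y}}\,y\ast x=x\ast y$, using $\{y,e,x\}=2\,y\ast x$.

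The genuine gap is in your treatment of $\mathcal{F}\circ\mathcal{G}$, specifically the claim that ``a failure of $[\mg_{\plus 1},\mg_{\minus 1}]=\mg_0$ likewise produces a graded ideal avoiding $\mg_{\minus 1}$.'' The natural ideal attached to this situation is $I=\mg_{\plus 1}\oplus[\mg_{\plus 1},\mg_{\minus 1}]\oplus\mg_{\minus 1}$, which \emph{contains} $\mg_{\minus 1}$ and therefore yields no contradiction with minimality; and a vector-space complement of $[\mg_{\plus 1},\mg_{\minus 1}]$ inside $\mg_0$ is in general not an ideal (an element of $\mg_0$ outside $[\mg_{\plus 1},\mg_{\minus 1}]$ typically brackets nontrivially into $\mg_{\pm 1}$, so the ideal it generates does meet $\mg_{\minus 1}$). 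What minimality gives you cheaply is only $\mg_0\cap Z(\mg)=0$ and, more generally, that the kernel of the $\mg_0$-action on $\mg_{\plus 1}\oplus\mg_{\minus 1}$ vanishes; recovering $\mg_0=[\mg_{\plus 1},\mg_{\minus 1}]$ --- equivalently, excluding ``outer'' pieces of $\mg_0$ acting faithfully on $\mg_{\pm 1}$ --- is exactly where the substance of the cited theorem lies and requires input beyond the one-line ideal argument you give (in \cite{Cantarini} this is handled with their precise hypotheses on the pairs considered). A smaller but similar soft spot is your minimality argument for $\mathcal{F}$: the issue is not only an $\mathfrak{sl}(2)$-trivial component of an ideal $I$ in $\mg_0$, but all of $I_0=I\cap\mg_0$; the clean route is to note that $I$ is $\ad(h)$-graded, that $I_0$ must annihilate $\mg_{\minus 1}$, that unitality ($\{e,y,e\}=2y$) then forces it to annihilate $\mg_{\plus 1}$ as well, hence to vanish in $\mathfrak{str}(V)\subset\mathfrak{gl}(V_{\plus})$, after which $I_{\plus 1}=0$ follows from $z=[h,z]=\pm[f,[e,z]]$. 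With those two steps repaired your outline would constitute a complete proof; as written, the $[\mg_{\plus 1},\mg_{\minus 1}]=\mg_0$ step does not go through.
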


\begin{remark}{\rm
\label{remarkprel}
In conclusion, a Jordan graded Lie superalgebra corresponds to a unital simple Jordan algebra if and only if it is a simple Jordan graded Lie superalgebra containing a short subalgebra.}
\end{remark}

We will use the short-hand notation $\mathfrak{co}(J)$ for ${\rm TKK}(J,J)$.

\section{Construction of small polynomial realisations for Lie superalgebras}\label{Constr}

Our main result of this section is summarised in the following theorem. 
\begin{theorem}\label{ThmGen}
Consider a finite dimensional $\mK$-Lie superalgebra $\fg$.
\begin{enumerate}[(i)]
\item There is a Lie superalgebra embedding $\fg \;\hookrightarrow\; \widehat{\wA}(\fg)$.
\item For any $\mZ$-grading of $\fg$, with $\fg=\fg_{\minus}\oplus\fk$ and any character $\lambda$ of $\fg_0$, there is a Lie superalgebra morphism 
$$\phi_\lambda:\,\fg\;\to\; \wA(\fg_{\minus}),$$
which is injective if and only if $U(\fg)\otimes_{U(\fk)}\mK_\lambda$ is a faithful $\fg$-module.
\end{enumerate}
\end{theorem}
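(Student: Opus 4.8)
The plan is to construct the morphism $\phi_\lambda$ explicitly and then identify its kernel with the annihilator of the induced module, reducing faithfulness to the faithfulness criterion stated in the theorem. The starting point is the generalised Verma-type module $M_\lambda := U(\fg)\otimes_{U(\fk)}\mK_\lambda$, where $\fk=\fg_0\oplus\fg_{\plus}$ acts on $\mK_\lambda$ through the extended character. By the Poincar\'e--Birkhoff--Witt theorem for Lie superalgebras, together with the symmetrisation isomorphism \eqref{sigmaisom}, there is a vector space isomorphism $M_\lambda \cong U(\fg_{\minus}) \cong S(\fg_{\minus})$, the last step again via $\sigma$. The left regular action of $\fg$ on $M_\lambda$ therefore transports to an action on $S(\fg_{\minus})$ by operators which I claim are polynomial differential operators, i.e.\ lie in $\wA(\fg_{\minus})$. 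Concretely, for $X\in\fg$ the operator $\phi_\lambda(X)$ is determined by $\phi_\lambda(X)(u\bullet \mathbf 1)=$ the $S(\fg_{\minus})$-component of $Xu$ in the PBW decomposition, and one must check this is a finite-order differential operator with polynomial coefficients.

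First I would verify that $\phi_\lambda$ lands in $\wA(\fg_{\minus})$ rather than merely in $\End(S(\fg_{\minus}))$. For $X\in\fg_{\minus}$ the action is essentially left multiplication, giving a polynomial-degree-raising operator; for $X\in\fg_0$ the adjoint action on $\fg_{\minus}$ plus the scalar $\lambda(X)$ gives a degree-preserving operator (linear vector field plus constant); the subtle case is $X\in\fg_{\plus}$, where commuting $X$ past a PBW monomial in $\fg_{\minus}$ and repeatedly applying $\lambda$ to the resulting $\fk$-terms produces a differential operator that \emph{lowers} polynomial degree. The key technical input here is a normal-ordering computation controlling how $\ad$ and $\widetilde{\ad}$ interact with symmetrisation; this is precisely the sort of statement the paper defers to Appendix \ref{technical}, and it is what guarantees finite differential order. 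I would then confirm $\phi_\lambda$ is a Lie superalgebra morphism: this is automatic, since it is just the representation of $\fg$ on $M_\lambda$ expressed in the chosen basis, so the bracket is preserved by construction and the only content is that the image consists of Weyl-algebra elements.

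With $\phi_\lambda:\fg\to\wA(\fg_{\minus})$ established as a morphism, the faithfulness statement follows from the general principle recorded just after \eqref{ASS}: for any algebra morphism into $\wA(V)$, the annihilator of the induced action on $S(V)$ equals the kernel of the morphism, because the canonical representation of $\wA(V)$ on $S(V)$ is faithful. Applying this with $V=\fg_{\minus}$ and $S(\fg_{\minus})\cong M_\lambda$, the kernel of $\phi_\lambda$ as a map of Lie superalgebras is $\fg\cap\ker(U(\phi_\lambda))$, and $\ker(U(\phi_\lambda))=\Ann_{U(\fg)}(M_\lambda)$. Thus $\phi_\lambda$ is injective precisely when no nonzero element of $\fg$ annihilates $M_\lambda$, which is equivalent to $M_\lambda$ being a faithful $\fg$-module (the forward direction is immediate; the reverse uses that an element of $\fg$ lying in $\Ann M_\lambda$ is in particular killed by the $\fg$-action). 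This yields the stated equivalence.

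The main obstacle I anticipate is the well-definedness and finiteness in the $\fg_{\plus}$ case: one must show that symmetrising a PBW monomial, moving a positive-degree element through it, and evaluating $\lambda$ on the positive part, yields an operator of bounded differential order with coefficients that are genuine polynomials on $\fg_{\minus}$, uniformly in the monomial. This requires the super-analogue of the Berezin/Conze normal-ordering identities, keeping careful track of the Koszul signs encoded in \eqref{sign permutation}; getting those signs consistent between the symmetrisation map, the supercommutator conventions $\ad_Z(Y)=[Z,Y]$ versus $\widetilde{\ad}_Z(Y)=[Y,Z]$, and the PBW ordering is where the real work lies.
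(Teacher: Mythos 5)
Your argument for part (ii) is essentially the paper's own: Theorem \ref{Polynomial realisation of a graded Lie algebra} is obtained precisely by transporting the $\fg$-action on $U(\fg)\otimes_{U(\fk)}\mK_\lambda\cong S(\fg_{\minus})$ through PBW and the symmetrisation map; the finiteness of the differential order for $X\in\fg_{\plus}$ is handled there by the recursion through the elements $W_{i,K_1,\ldots,K_j}(X)$ and $H_{i,K_1,\ldots,K_j}(X)$, which terminates because the $\mZ$-grading has finitely many nonzero terms; and the normal-ordering input you defer to is exactly Lemmata \ref{lemma 1}--\ref{Lemma 3} and Corollary \ref{Lemma rewrite of theorem Berezin}. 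Your faithfulness argument, identifying $\ker\phi_\lambda$ with the elements of $\fg$ annihilating $S(\fg_{\minus})\cong M_\lambda$ via the faithfulness of the canonical $\wA(V)$-action on $S(V)$, is also the one the paper uses. One minor imprecision: for a general $\mZ$-grading the action of $X\in\fg_{\minus}$ is not just left multiplication but carries the correction terms $\sum_{K>0}\frac{(-1)^{|K|}B_{|K|}}{K!}\,s^K_{\fg_{\minus}}(X)\,\partial^K$ (these vanish only in the $3$-graded case, where $\fg_{\minus}$ is abelian); this does not affect your argument since the sum is finite.

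The genuine omission is part (i). Your proposal constructs only $\phi_\lambda$ and never addresses the embedding $\fg\hookrightarrow\widehat{\wA}(\fg)$, nor why the completion $\widehat{\wA}(\fg)$ appears there. In the paper this is Theorem \ref{Berezin theorem}: one takes the left regular representation of $\fg$ on $U(\fg)$ itself (no quotient by $U(\fk)$), transports it to $S(\fg)$ via $\sigma$, and obtains the Berezin-type formula $\pi(X)=\sum_{K}\frac{(-1)^{|K|}B_{|K|}}{K!}\,s^K_{\fg}(X)\,\partial^K$. Without a grading to force $s^K_{\fg}(X)=0$ for large $|K|$, this is an infinite series in the $\partial^K$, which is why the image lands only in the completion $\widehat{\wA}(\fg)$ and not in $\wA(\fg)$; injectivity then follows from the faithfulness of the left regular representation. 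This is not a formal specialisation of your part (ii) argument (there is no character and no quotient module involved), so part (i) requires its own, albeit short, proof on top of what you have written.
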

In case~$\fg$ is a Lie algebra, Theorem \ref{ThmGen}(i) follows from Theorem 3 in~\cite{Berezin}, while Theorem \ref{ThmGen}(ii) can be obtained from (the proof of) Proposition 2.2 in~\cite{Conze}.

Furthermore, we will determine the explicit form of $\phi_\lambda$ in Theorem \ref{ThmGen}(ii) in case of a 3-grading, which is new also for Lie algebras. In Section \ref{secBessel}, this will lead to the natural appearance of Bessel operators, containing the ones in~\cite{Dib, FK, BesselOperators, Opq, Sahi} as a special case.

\subsection{Superversion of a result of Berezin}
In this section we generalise the approach of \cite{Berezin} to superalgebras, where we work out most of the technical details in Appendix \ref{technical}. We consider the left regular representation of a Lie superalgebra $\mg$ on $U(\mg)$ by left multiplication. Using the isomorphism \eqref{sigmaisom}, this yields a $\mg$-representation $\pi$ on $S(\mg).$ 

We prove that the image of $\pi:\fg\to\End(S(\fg))$ is actually contained in $\widehat{\wA}(\fg)$ and obtain an explicit expression in the following theorem. Therefore we choose a basis
$ X_i,$ $1\leq i \leq m+n$ of $\fg$, where~$X_i$ is even if $i\leq m$ and odd if $i > m$.

\begin{theorem}\label{Berezin theorem}
The action $\pi$ defined by $$\pi(X) Y := \sigma^{\minus 1}( X \sigma(Y)) \qquad \text{ for all } X \in \mg, \quad Y \in S(\mg)$$ is given by
\begin{align}  \label{supercase of Berezin}
\pi(X) Y  = \sum_{K\in \mN^{m|n}} \frac{(-1)^{\abs{K}}B_{\abs{K}}}{K!}   s^K_{\mg}(X)  \; \partial^K Y,
\end{align}
where the operator $s^K_{\mg}\in \End(\fg)$ is defined as
\begin{align} \label{definition sKg}
s^K_{\mg}(X):=\sigma\left(\widetilde{\ad}_{X_1}^{\bullet k_1} \bullet \cdots\bullet \widetilde{\ad}_{X_{m+n}}^{\bullet k_{m+n}}\right)X\qquad \text{ for all } X \in \mg,
\end{align}
and with $B_{\abs{K}}$ the Bernoulli numbers \eqref{Bernoulli numbers}.
\end{theorem}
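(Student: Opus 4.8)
The plan is to verify the operator identity \eqref{supercase of Berezin} by testing both sides on the family of exponential vectors. Since every element of $\widehat{\wA}(\fg)\subset\End(S(\fg))$ is determined by its action on $S(\fg)$, and since the exponentials $\exp_\bullet(\xi):=\sum_{n\geq 0}\xi^{\bullet n}/n!$, for $\xi=\sum_i t_i X_i$ an \emph{even} element of $\fg$ (allowing the scalar $t_i$ to be odd when $X_i$ is odd, i.e.\ working over a Grassmann envelope), have Taylor coefficients in the $t_i$ that span all of $S(\fg)$, it suffices to prove that the two sides agree after applying $\sigma$ to their values on $\exp_\bullet(\xi)$.

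First I would record the two elementary facts that drive the computation. On the left, $\sigma(\exp_\bullet(\xi))=\exp(\xi)$ in the completion of $U(\fg)$, since symmetrisation of a power of a single element is trivial; hence $\sigma(\pi(X)\exp_\bullet(\xi))=X\exp(\xi)$. On the symmetrisation side, I would establish the ``derivative of $\exp$'' identity
\[
\sigma\!\left(Z\bullet\exp_\bullet(\xi)\right)=\left(\tfrac{e^{\ad_\xi}-1}{\ad_\xi}(Z)\right)\exp(\xi),\qquad Z\in\fg,
\]
via the integral representation $\int_0^1 e^{s\xi}Z\,e^{(1-s)\xi}\,ds$ together with the beta-integral $\int_0^1 s^a(1-s)^b\,ds=a!\,b!/(a+b+1)!$, matching it termwise with $\sigma(Z\bullet\xi^{\bullet n})=\tfrac{1}{n+1}\sum_{j}\xi^{j}Z\xi^{n-j}$.

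Next I would evaluate the right-hand side of \eqref{supercase of Berezin} on $\exp_\bullet(\xi)$. Because $\partial^K\exp_\bullet(\xi)=t^{K}\exp_\bullet(\xi)$, the multinomial theorem for the supersymmetric product gives $\sum_{|K|=k}\tfrac{t^K}{K!}\,s^K_{\fg}=\tfrac{1}{k!}\,\widetilde{\ad}_\xi^{\,k}$ as operators on $\fg$, where $\widetilde{\ad}_\xi=\sum_i t_i\,\widetilde{\ad}_{X_i}=-\ad_\xi$ since $\xi$ is even. Summing against the Bernoulli weights and using $\sum_k \tfrac{(-1)^k B_k}{k!}s^k=\tfrac{s}{1-e^{-s}}$ (the defining generating function of the $B_k$ evaluated at $-s$), the coefficient collapses to $Z:=\tfrac{\ad_\xi}{e^{\ad_\xi}-1}(X)$. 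Applying $\sigma$ through the derivative-of-$\exp$ identity then yields $\tfrac{e^{\ad_\xi}-1}{\ad_\xi}\!\big(\tfrac{\ad_\xi}{e^{\ad_\xi}-1}(X)\big)\exp(\xi)=X\exp(\xi)$, which matches the left-hand side; injectivity of $\sigma$ and the spanning property of exponentials then finish the proof. The appearance of the Bernoulli numbers is thus forced: they are exactly the power series inverse to the factor $\tfrac{e^{\ad_\xi}-1}{\ad_\xi}$ produced by symmetrisation.

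The main obstacle I anticipate is the bookkeeping of Koszul signs in the super setting, which is precisely what the appendix handles. One must (a) make sense of $s^K_{\fg}$ as a symmetrised product of the operators $\widetilde{\ad}_{X_i}\in\End(\fg)$ with the correct signs \eqref{sign permutation}; (b) justify the super multinomial identity for $\exp_\bullet(\sum_i t_i\widetilde{\ad}_{X_i})$ and the sign in $\partial^K\exp_\bullet(\xi)=t^{K}\exp_\bullet(\xi)$ when odd indices occur; and (c) legitimise the ``even parameter $\xi$'' device through a functor-of-points / Grassmann-envelope argument, so that the reduction to exponentials remains valid when $X$ or some basis vectors are odd. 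Once the even element $\xi$ is in place, the identity $\widetilde{\ad}_\xi=-\ad_\xi$ holds with no extra sign, and the generating-function computation proceeds exactly as in the classical Berezin theorem.
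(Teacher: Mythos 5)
Your argument is correct in outline, but it takes a genuinely different route from the paper. The paper proves Theorem~\ref{Berezin theorem} by induction on the degree of $Y$: the key input is Lemma~\ref{lemma 1}, the identity $\sigma(XY)=\sum_K\tfrac{(-1)^{|K|}}{(|K|+1)K!}s^K_{\fg}(X)\sigma(\partial^KY)$ (itself proved by a polarisation device, extracting the coefficient of $t_1\cdots t_{p+q}$ from powers of $\kappa(t)=Xt+\sum_iZ_it_i$), after which the induction hypothesis is fed back in and the resulting double sum is collapsed using Lemma~\ref{Lemma 3}, which is exactly the recursion \eqref{Bernoulli numbers} for the Bernoulli numbers. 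In your approach the Bernoulli numbers instead enter through their generating function $x/(e^x-1)$, as the power-series inverse of the factor $(e^{\ad_\xi}-1)/\ad_\xi$ produced by symmetrising $Z\bullet\exp_\bullet(\xi)$; no induction and no recursion identity are needed, so the proof is shorter and makes the \emph{reason} for the Bernoulli coefficients transparent. What you pay for this is the formal apparatus: one must work in a completion of $S(\fg)\otimes\Lambda$ over a Grassmann envelope so that $\xi=\sum_it_iX_i$ is even, check that the signs \eqref{sign permutation} in the definition of $\sigma$ are exactly the Koszul signs needed for $\sigma(\exp_\bullet(\xi))=\exp(\xi)$ and for the super multinomial identity $\sum_{|K|=k}\tfrac{t^K}{K!}s^K_{\fg}=\tfrac{1}{k!}\widetilde{\ad}_\xi^{\,k}$, and track the residual signs in $\partial^i\xi$ and in $\widetilde{\ad}_\xi(X)=[X,\xi]$ versus $\sum_it_i[X,X_i]$, which depend on $|X|$ and $|X_i|$; you flag all of these, and they do work out, but a complete writeup must pin them down (this bookkeeping is the counterpart of what the paper's Appendix~\ref{technical} does degree by degree). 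It is worth noting that the paper's proof of Lemma~\ref{lemma 1} already uses the same polarisation idea at fixed finite degree, so the two arguments are close cousins; yours simply assembles all degrees into a single exponential, which is essentially Berezin's original mechanism, whereas the paper's version stays entirely inside $S(\fg)$ and $U(\fg)$ without completions.
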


First we observe that this theorem implies Theorem \ref{ThmGen}(i). Indeed, the expression in equation~\eqref{supercase of Berezin} confirms that $\pi(X)\in \widehat{\wA}(\fg)$ for any $X\in \fg$. Furthermore, the injectivity of $\pi:\fg\to  \widehat{\wA}(\fg)$ follows immediately from the fact that the left regular representation (and hence $\pi$) is faithful.

\begin{proof}[Proof of Theorem \ref{Berezin theorem}]
For $Y$ of degree one, i.e. $Y \in \mg$, the claim reduces to
\[
X\sigma(Y) = \sigma(XY)+\frac{1}{2} \sigma([X,Y]),
\]
which is clearly true. Now assume \eqref{supercase of Berezin} holds for all elements in $S(\mg)$ which have lower degree than $Y$.
We can rewrite Lemma~\ref{lemma 1} as
\begin{align*}
X \sigma(Y)= \sigma(XY)-\sum_{K>0} \frac{(-1)^{\abs{K}} }{(\abs{K}+1)K!} s^K_{\mg}(X) \sigma( \partial^K Y).
\end{align*}
Since the degree of $ \partial^K Y$ is now strictly lower than the degree of $Y$ we can apply our induction hypothesis on $s^K_{\mg}(X) \sigma( \partial^K Y)=\sigma(\pi(s_{\fg}^K(X))\partial^KY)$. This leads to
\begin{align*}
X\sigma(Y)= &\sigma(XY) -\sum_{K>0} \frac{(-1)^{\abs{K}} }{(\abs{K}+1)K!}  \sum_{L \in \mN^{m|n}} \frac{(-1)^{\abs{L}}B_{\abs{L}}}{L!}  \sigma\left(s^L_{\mg}\circ s^K_{\mg}(X) \; \partial^L \partial^K Y\right) \\
=& \sigma(XY) -\sum_{K>0} \sum_{L < K} \frac{(-1)^{\abs{K-L}+\abs{L}} B_{\abs{L}}}{(\abs{K-L}+1)(K-L)!L!}     \sigma\left(s^L_{\mg}\circ s^{K-L}_{\mg}(X) \; \partial^L \partial^{K-L} Y\right).
\end{align*}
From Lemma~\ref{Lemma 3}, we have
\begin{align*}
\sum_{L < K} \frac{(-1)^{\abs{K-L}+\abs{L}} B_{\abs{L}}}{(\abs{K-L}+1)(K-L)!L!}     s^L_{\mg}\circ s^{K-L}_{\mg}(X) \; \partial^L \partial^{K-L} Y 
= - \frac{(-1)^{\abs{K}}}{ K!} B_{\abs{K}} s^K_{\mg}(X) \partial^K (Y).
\end{align*}
Using this, we obtain
\begin{equation}
\label{eqBerezin}
X\sigma(Y) = 
\sigma(XY) +\sum_{K>0}\frac{(-1)^KB_{|K|}}{K!} \sigma(s^K_{\mg} (X )\partial^K Y),
\end{equation}
which proves the theorem.
\end{proof}

\begin{corollary} \label{Lemma rewrite of theorem Berezin}
We have
\[
\sigma(XY) = (-1)^{\abs{X}\abs{Y}} \sigma(Y) X + \sum_{K >0} \frac{(-1)^{\abs{K}}C_{\abs{K}}}{K!} \sigma(s^K_{\mg}(X) \; \partial^K Y),
\]
where~$B_{i} =- C_{i}$ for $i \geq 2$ and $C_{1}=B_{1}=-1/2$.
\end{corollary}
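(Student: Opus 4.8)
The plan is to deduce the statement from equation~\eqref{eqBerezin} together with the adjoint-equivariance of the symmetrisation map $\sigma$, by a coefficient comparison that exploits the defining relation between the $B_i$ and the $C_i$. Rewriting \eqref{eqBerezin} in the form
\[
\sigma(XY) = X\sigma(Y) - \sum_{K>0}\frac{(-1)^{\abs K}B_{\abs K}}{K!}\,\sigma\!\left(s^K_\mg(X)\,\partial^K Y\right),
\]
I would equate this right-hand side with the right-hand side of the asserted corollary (both expressing $\sigma(XY)$). After cancelling the common summand, the corollary becomes equivalent to
\[
X\sigma(Y)-(-1)^{\abs X\abs Y}\sigma(Y)X=\sum_{K>0}\frac{(-1)^{\abs K}\big(B_{\abs K}+C_{\abs K}\big)}{K!}\,\sigma\!\left(s^K_\mg(X)\,\partial^K Y\right).
\]
Here the hypotheses $C_i=-B_i$ for $i\ge 2$ and $C_1=B_1=-1/2$ are exactly what is needed: the factor $B_{\abs K}+C_{\abs K}$ vanishes for $\abs K\ge 2$ and equals $-1$ for $\abs K=1$. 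Thus every higher-order term drops out, and the whole identity collapses to the single relation
\[
X\sigma(Y)-(-1)^{\abs X\abs Y}\sigma(Y)X=\sum_{j=1}^{m+n}\sigma\!\left([X,X_j]\,\partial^j Y\right),
\]
where I used $s^{K}_\mg(X)=\widetilde{\ad}_{X_j}(X)=[X,X_j]$ and $\partial^K=\partial^j$ when $K$ is the $j$-th unit multi-index.

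It then remains to establish this last relation. Its left-hand side is precisely the supercommutator $[X,\sigma(Y)]$ in $U(\mg)$, i.e. the value of the adjoint action $\ad_X$ on $\sigma(Y)$. On the right, the operator $\sum_j [X,X_j]\,\partial^j$ is the unique super-derivation of $S(\mg)$ sending each generator $X_i$ to $[X,X_i]$, and this is by definition the adjoint action of $X$ on $S(\mg)$ extended as a derivation. Hence the relation asserts exactly that $\sigma$ intertwines the adjoint actions on $S(\mg)$ and $U(\mg)$, that is, that the isomorphism \eqref{sigmaisom} is a morphism of adjoint $\mg$-modules. I would prove this equivariance by factoring $\sigma$ through the tensor algebra: on $\mg^{\otimes p}$ the diagonal adjoint action commutes with the $S_p$-action, hence with the supersymmetrisation projector defining $\sigma$, while the multiplication map $\mg^{\otimes p}\to U(\mg)$ is $\ad$-equivariant because $\ad_X$ is a super-derivation of the associative product. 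Composing these two equivariant maps yields the equivariance of $\sigma$, and thus the displayed relation.

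The only genuinely delicate point is the super-sign bookkeeping in this equivariance step. One must verify that $\sum_j [X,X_j]\,\partial^j$ carries precisely the Koszul signs needed to agree with the diagonal adjoint action on $S(\mg)$, and that these match the signs produced when $\ad_X$, as a super-derivation, is distributed across a product $a_{\tau(1)}\cdots a_{\tau(p)}$ in $U(\mg)$. I expect this sign check — rather than any structural difficulty — to be the main obstacle; once it is settled, no further computation is required, since the Bernoulli/$C$ relation has already eliminated all contributions with $\abs K\ge 2$.
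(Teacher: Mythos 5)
Your argument is correct, and for the corollary itself it follows the same route as the paper: rewrite \eqref{eqBerezin}, compare coefficients using $C_1=B_1$ and $C_i=-B_i$ for $i\ge 2$, and observe that everything collapses to the single first-order identity $X\sigma(Y)-(-1)^{\abs{X}\abs{Y}}\sigma(Y)X=\sigma\bigl(\sum_j[X,X_j]\,\partial^j Y\bigr)$. That identity is exactly Lemma~\ref{lemma Lie bracket of sigma}, which the paper's (one-line) proof of the corollary simply cites; the paper establishes the lemma in the appendix by an explicit combinatorial computation with the signs $\alpha(\tau,i)$, $\beta(\tau,k)$, $\gamma(\tau,k)$. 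Where you genuinely diverge is in how you prove this remaining identity: you identify $\sum_j[X,X_j]\,\partial^j$ as the superderivation of $S(\mg)$ extending $\ad_X$ and argue that $\sigma$ intertwines the adjoint actions by factoring it through the tensor algebra --- the diagonal adjoint action on $\mg^{\otimes p}$ commutes with the signed $S_p$-action, and the multiplication $\mg^{\otimes p}\to U(\mg)$ is equivariant because $\ad_X$ is a superderivation of the associative product. This is a sound and more structural proof of the lemma: the Koszul signs are absorbed once and for all into the statement that the $S_p$-action is the signed one, so the ``delicate sign check'' you flag is genuinely routine (note also that $f\cdot D$ is a superderivation of parity $\abs{f}+\abs{D}$ whenever $D$ is one, which settles the first half of that check). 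The trade-off is only that the paper's appendix computation is more self-contained and explicit, while your equivariance argument is shorter and makes it transparent why only the $\abs{K}=1$ terms survive.
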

\begin{proof}
Follows immediately from Lemma~\ref{lemma Lie bracket of sigma} and equation \eqref{eqBerezin}.
\end{proof}

\subsection{A method to construct small polynomial realisations}
Let $\mg$ be a finite-dimensional $\mZ$-graded Lie superalgebra, where we maintain the notation of Subsection \ref{PrelSec1}. We consider a character $\lambda:\fg_0\to\mK$, interpreted as a character of $\fk$. For any such $\lambda$ we will use Theorem \ref{Berezin theorem} to construct a realisation of $\mg$ on $S(\mg_{\minus})$, by reinterpreting the parabolic Verma module of scalar type $U(\mg) \otimes_{U(\mathfrak{k})} \mK_\lambda$.

As vector spaces we have
\[
U(\mg) \otimes_{U(\mathfrak{k})} \mK_\lambda \cong U(\mg_{\minus}) \otimes \mK \cong S(\mg_{\minus}).
\]
The $\mg$-representation on $U(\mg) \otimes_{U(\mathfrak{k})} \mK_\lambda$ hence yields a $\fg$-representation $\pi$ on $S(\mg_{\minus})$ using  the symmetrisation map $\sigma$ and \[\mu\colon S(\mg_{\minus})\otimes \mK\; \tilde\to \;S(\mg_{\minus}) \; ; \; Y \otimes a \to aY.\]
We will prove that the image of $\pi$ is contained in $\wA(\fg_{\minus}).$ 

Let $(r|s)$ be the dimension of $\mg_{\minus}$ and choose a basis $X_i$ with $1\le i\le r$ of the even part and a basis $X_{j+ r}$ with $1\le j\le s$ of the odd part. To be able to give an explicit expression of this action we define the following elements of $\mg_{\minus}$ and $\mk$.
For $1\le i\le r+s$ and $X \in \mk$, we set \[
W_i(X) +H_i(X):=  [X,X_i] ,
\]
uniquely defined by the condition~$W_i(X)\in\mg_{\minus}$ and $H_i(X)\in\mk$.
For $K_1 \in \mN^{r|s} \backslash \{ 0 \}$, we put \[
W_{i,K_1}(X)+H_{i,K_1}(X) := \frac{(-1)^{\abs{K_1}}C_{\abs{K_1}}}{K_1!} s^{K_1}_{\mg_{\minus}}(H_i(X)), 
\]
with~$W_{i,K_1}(X)\in\mg_{\minus}$ and $H_{i,K_1}(X)\in\mk$. The operator $s_{\fg_{\minus}}^{K_1}$ is defined as in \eqref{definition sKg}, but now adjoining only the basis elements of $\mg_{\minus}$ instead of the whole $\mg$.
Recursively we also set, for $K_1,\ldots, K_j \in \mN^{r|s} \backslash \{ 0 \}$,
\[
W_{i,K_1,\ldots,K_j}(X)+H_{i,K_1,\ldots,K_j}(X) := \frac{(-1)^{\abs{K_j}}C_{\abs{K_j}}}{K_j!} s^{K_j}_{\mg_{\minus}}(H_{i,K_1,\ldots, K_{j-1}}(X)),
\]
where again $W_{i,K_1,\ldots,K_j}(X)\in\mg_{\minus}$ and $H_{i,K_1,\ldots,K_j}(X)\in\mk$.

\begin{theorem} \label{Polynomial realisation of a graded Lie algebra}
For any character $\lambda:\fg_0\to\mK$, the action $\pi$ of $\fg$ on $S(\mg_{\minus})$ defined by 
\[
\pi(X) Y:= \mu \left( \sigma^{\minus 1} \otimes \id (X( \sigma(Y) \otimes 1)) \right), \qquad Y\in S(\mg_{\minus}),
\]
is given by
\begin{itemize}
\item $X \in \mg_{\minus} \qquad \pi(X) = X+\sum\limits_{\substack{K>0}}\frac{(-1)^{\abs{K}}B_{\abs{K}}}{K!} s^K_{\mg_{\minus}}(X) \;\partial^K $
\item $X \in \mg_0 \qquad \pi(X) =  \lambda(X)+\sum_{i=1}^{r+s}[X,X_i]\partial^i $
\item $X \in \mg_{\plus} $
 \begin{align*} \pi(X) &= 
 \sum_{i=1}^{r+s} \sum_{j =0}^l  \sum\limits_{\substack{K_1>0}} \cdots \sum\limits_{\substack{K_j>0}} W_{i,K_1,\ldots K_j}(X)  \partial^{K_j}\cdots \partial^{K_1} \partial^i    \\
&+ \sum_{i=1}^{r+s} \sum_{j =0}^{l-1}  \sum\limits_{\substack{K_1>0}} \cdots \sum\limits_{\substack{K_j>0}} \lambda(H_{i,K_1,\ldots K_j}(X)) \partial^{K_j}\cdots \partial^{K_1} \partial^i  
\end{align*} 
\end{itemize}
where the $X_i$ form a homogeneous basis of $\mg_{\minus}$.
\end{theorem}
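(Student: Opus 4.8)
The plan is to make the isomorphism $U(\fg)\otimes_{U(\fk)}\mK_\lambda\cong S(\fg_{\minus})$ explicit at the level of the $\fg$-action. Under the PBW decomposition $U(\fg)\cong U(\fg_{\minus})\otimes U(\fk)$ every $u\in U(\fg)$ has a unique normal-ordered expansion $u=\sum_a u_a k_a$ with $u_a\in U(\fg_{\minus})$ and $k_a\in U(\fk)$, and the image of $u\otimes 1$ in the module is $\sum_a\lambda(k_a)\,u_a\otimes 1$, where $\lambda\colon U(\fk)\to\mK$ is the algebra character induced by $\mK_\lambda$ (so $\lambda$ kills $\fg_{\plus}$). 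Since $X$ acts by left multiplication, the given definition of $\pi$ unwinds to
\[
\pi(X)Y=\sigma^{\minus 1}\big((\id\otimes\lambda)(X\sigma(Y))\big),\qquad Y\in S(\fg_{\minus}).
\]
Everything thus reduces to rewriting $X\sigma(Y)$ in normal-ordered form, and Corollary~\ref{Lemma rewrite of theorem Berezin} is precisely the device that pushes one left factor through $\sigma$ of an element of $S(\fg_{\minus})$.

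For $X\in\fg_{\minus}$ the product $X\sigma(Y)$ lies in $U(\fg_{\minus})$, no $\lambda$-evaluation takes place, and the claimed formula is Theorem~\ref{Berezin theorem} applied to the subalgebra $\fg_{\minus}$ (the $K=0$ term producing the leading $X$). For $X\in\fk$ I first split off the leading term: feeding Corollary~\ref{Lemma rewrite of theorem Berezin} into \eqref{eqBerezin} and using $B_i+C_i=0$ for $i\ge 2$ together with $B_1=C_1=-1/2$, all $\abs{K}\ge 2$ contributions cancel and only $\abs{K}=1$ survives, giving
\[
X\sigma(Y)=(-1)^{\abs{X}\abs{Y}}\sigma(Y)\,X+\sum_i\sigma([X,X_i]\,\partial^i Y),
\]
the sum ranging over the basis $X_i$ of $\fg_{\minus}$ since $\partial^i Y=0$ otherwise. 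The first term is already normal-ordered and $(\id\otimes\lambda)$ sends it to $\lambda(X)Y$ (nonzero only for even $X$, so the sign is immaterial); it vanishes for $X\in\fg_{\plus}$. For $X\in\fg_0$ one has $[X,X_i]\in\fg_{\minus}$, hence $H_i(X)=0$, the recursion below is empty, and one reads off $\pi(X)=\lambda(X)+\sum_i[X,X_i]\partial^i$.

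The substantive case is $X\in\fg_{\plus}$. Writing $[X,X_i]=W_i(X)+H_i(X)$, the $W_i(X)\in\fg_{\minus}$ pieces already sit in $U(\fg_{\minus})$ and yield the $j=0$ term $\sum_i W_i(X)\partial^i$ of the first sum, while each $\sigma(H_i(X)\,\partial^i Y)$ still needs normal-ordering. Applying Corollary~\ref{Lemma rewrite of theorem Berezin} to it—with $H_i(X)$ in the role of $X$ and $\partial^i Y$ in that of $Y$, and noting that $s^{K}_{\fg}$ reduces to $s^{K}_{\fg_{\minus}}$ on the $\fg_{\minus}$-supported indices surviving $\partial^K$—produces the normal-ordered contribution $\lambda(H_i(X))\partial^i$ (the $j=0$ term of the second sum) together with the remainders $\tfrac{(-1)^{\abs{K_1}}C_{\abs{K_1}}}{K_1!}s^{K_1}_{\fg_{\minus}}(H_i(X))=W_{i,K_1}(X)+H_{i,K_1}(X)$. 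The $W$-parts land in $U(\fg_{\minus})$, the $H$-parts are fed back into the same corollary, and the new derivative $\partial^{K_1}$ is applied outermost. This loop is exactly the recursion defining $W_{i,K_1,\dots,K_j}$ and $H_{i,K_1,\dots,K_j}$, so after $j$ passes one obtains precisely $W_{i,K_1,\dots,K_j}(X)\,\partial^{K_j}\cdots\partial^{K_1}\partial^i$ and $\lambda(H_{i,K_1,\dots,K_j}(X))\,\partial^{K_j}\cdots\partial^{K_1}\partial^i$; the Bernoulli/$C$ coefficients are invisible in the answer only because they have been absorbed into the definitions of $W$ and $H$.

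Termination and the asymmetry of the index ranges come from the grading. For homogeneous $X\in\fg_p$ with $p>0$, the bracket $[X,X_i]$ drops the degree to at most $p-1$, and each further application of $s^{K}_{\fg_{\minus}}$ with $K>0$ lowers it by at least one, so $\deg H_{i,K_1,\dots,K_j}\le p-1-j$. As $H_{i,K_1,\dots,K_j}\in\fk$ has non-negative degree, it vanishes once $j>p-1$, bounding the second sum by $j\le l-1$ with $l$ the top degree of the grading; the partner $W_{i,K_1,\dots,K_j}\in\fg_{\minus}$ can still be nonzero for one more value of $j$, giving $j\le l$ in the first sum. The main obstacle is not any single identity but the disciplined bookkeeping of this recursion: keeping the nested partial derivatives in the correct order, checking that every Koszul sign produced is either trivial or attached to a vanishing value of $\lambda$ (the surviving $\lambda(H)$ force the relevant element even), and confirming that the degree estimate yields exactly the stated ranges $0\le j\le l$ and $0\le j\le l-1$.
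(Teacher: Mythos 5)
Your proposal is correct and follows essentially the same route as the paper: reduce everything to normal-ordering $X\sigma(Y)$ via Theorem~\ref{Berezin theorem} for $\fg_{\minus}$, the commutator identity for $\fk$, and the recursive application of Corollary~\ref{Lemma rewrite of theorem Berezin} generating the $W_{i,K_1,\ldots,K_j}$ and $H_{i,K_1,\ldots,K_j}$ for $\fg_{\plus}$. The only cosmetic difference is that you rederive the content of Lemma~\ref{lemma Lie bracket of sigma} by combining \eqref{eqBerezin} with the corollary and cancelling $B_{\abs{K}}+C_{\abs{K}}$, where the paper simply cites that lemma, and your degree-counting justification of the ranges $j\le l$ and $j\le l-1$ is somewhat more explicit than the paper's.
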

This theorem implies Theorem \ref{ThmGen}(ii). The expressions for $\pi(X)$ show that $\pi(X)\in \widehat{\wA}(\fg_{\minus})$. As~$\fg$ is finite dimensional, only a finite number of terms in its $\mZ$-grading are non-zero. This implies that $s_{\fg_{\minus}}^K(X)=0$ for any $X\in\fg_{\minus}$, for $|K|$ sufficiently large and likewise only a finite number of $H_{i,K_1,\ldots K_j}(X)$ (and hence $W_{i,K_1,\ldots K_j}(X)$) is non-zero for $X\in \fg_{\plus}$. Consequently $\pi(X)\in {\wA}(\fg_{\minus})$ for all $X\in\fg$.
\begin{proof}[Proof of Theorem \ref{Polynomial realisation of a graded Lie algebra}]
Let $X$ be an element of $\mg_{\minus}$, $Y$ an element of $S(\mg_{\minus})$ and extend the basis $(X_i)$ to a basis of $\fg$. Applying Theorem \ref{Berezin theorem}, we obtain
\[
X \sigma(Y)  = \sigma(XY)+ \sum_{K\in \mN^{m|n}, K>0} \frac{(-1)^{\abs{K}}B_{\abs{K}}}{K!} \sigma  \left( s^K_{\mg}(X)  \; \partial^K Y\right).
\]
Since $Y \in S(\mg_{\minus})$, the derivative $\partial^i Y =0$ for all $i>r+s$. Therefore we can restrict to $K\in\mN^{r|s}$ in the summation, hence
\[
 \pi(X)Y = XY+\sum\limits_{\substack{K>0}}\frac{(-1)^{\abs{K}}B_{\abs{K}}}{K!} s^K_{\mg_{\minus}}(X) \;\partial^K Y.
\]

Now, let $X$ be a homogeneous element of $\mg_0.$ Using Lemma~\ref{lemma Lie bracket of sigma}, we find
\begin{align*}
X\sigma(Y) \otimes 1 = [X,\sigma(Y)] \otimes 1 + (-1)^{\abs{X}\abs{Y}} \sigma(Y) X \otimes 1 \\
= \left(\sum_{i=1}^{r+s} \sigma\left([X,X_i] \partial^i  Y \right) + \lambda(X) \sigma(Y)\right) \otimes 1,
\end{align*}
where we again restricted our summation to elements in $\mg_{\minus}$ since the partial derivatives of $Y$ with respect to elements in $\mk$ are zero. We also used that $\lambda(X)\not=0$ implies that $\abs{X}=0$ since $\lambda$ is an even morphism.
This proves the claim for $X$ in $\mg_0$.

Finally, let $X$ be a homogeneous element of $\mg_{\plus}$. 
From now on we will also drop the $X$ in $W_{i,K_1, \ldots K_j}(X)$ and $H_{i,K_1, \ldots K_j}(X)$ for ease of notation.
Using Lemma~\ref{lemma Lie bracket of sigma} and Corollary \ref{Lemma rewrite of theorem Berezin}, we get
\begin{align*}
X \sigma(Y) \otimes 1 &= \sigma( \sum_{i=1}^{r+s} [X,X_i] \partial^i  Y) \otimes 1 + (-1)^{\abs{X}\abs{Y}} \sigma(Y) X \otimes 1 \\
&=  \sigma( \sum_{i=1}^{r+s} W_i \partial^i  Y) \otimes 1 + \sum_{i=1}^{r+s} (-1)^{(\abs{X}+\abs{X_i})\abs{\partial^i Y}} \sigma(\partial^i  Y) H_i \otimes 1 \\ 
&+ \sum_{i=1}^{r+s} \sum\limits_{\substack{K_1>0}} \frac{(-1)^{\abs{K_1}}C_{\abs{K_1}}}{K_1!} \sigma(s^{K_1}_{\mg_{\minus}}(H_i) \; \partial^{K_1} \partial^i   Y) \otimes 1 \\
&=\sum_{i=1}^{r+s} \sigma (W_i \partial^i  Y) \otimes 1 
+ \sum_{i=1}^{r+s} \lambda(H_i) \sigma(\partial^i  Y) \otimes 1  \\ & + \sum_{i=1}^{r+s} \sum\limits_{\substack{K_1>0}} \sigma\left((W_{i,K_1}+H_{i,K_1}) \; \partial^{K_1} \partial^i   Y\right) \otimes 1.
\end{align*}

We will now repeatedly apply Corollary \ref{Lemma rewrite of theorem Berezin} to the part in $H_{i,K_1, \ldots, K_j} $. This procedure finishes after $l$ steps, since $H_{i,K_1,\ldots K_l}=0.$
Thus
\begin{align*}
X \sigma(Y) \otimes 1 &=\sum_{i=1}^{r+s} \sigma (W_i \partial^i  Y) \otimes 1 + \sum_{i=1}^{r+s} \lambda(H_i) \sigma(\partial^i  Y) \otimes 1 \\
&+ \sum_{i=1}^{r+s} \sum_{j =1}^l  \sum\limits_{\substack{K_1>0}} \cdots \sum\limits_{\substack{K_j>0}} \sigma(W_{i,K_1,\ldots K_j}  \; \partial^{K_j}\cdots \partial^{K_1} \partial^i   Y) \otimes 1 \\
&+ \sum_{i=1}^{r+s} \sum_{j =1}^{l-1}  \sum\limits_{\substack{K_1>0}} \cdots \sum\limits_{\substack{K_j>0}} \lambda(H_{i,K_1,\ldots K_j} ) \sigma(\partial^{K_j}\cdots \partial^{K_1} \partial^i   Y)  \otimes 1.
\end{align*}
 This concludes the proof. 
\end{proof}

\begin{remark}{\rm
If $\mg$ is $3$-graded, $\pi:\fg\to\wA(\fg_{\minus})$ of Theorem \ref{Polynomial realisation of a graded Lie algebra} simplifies to
\begin{enumerate}[(i)] \label{three graded realisation}
\item $X \in \mg_{\minus 1} \qquad \pi(X) = X$
\item $X \in \mg_0 \qquad \pi(X) =  \lambda(X)+\sum_{i=1}^{r+s}[X,X_i]\partial^i  $
\item $X \in \mg_{\plus 1} \qquad \pi(X) = \sum_{i=1}^{r+s} \lambda([X,X_i])\partial^i  +\frac{1}{2}\sum_{i,j=1}^{r+s} [[X,X_i],X_j]\partial^j  \partial^i .$
\end{enumerate}}
\end{remark}

\section{Bessel operators for Jordan superpairs}\label{secBessel}
In this section we consider $\mK=\mR$.

\subsection{The general case}
Consider a real Jordan superpair $V=(V_{\plus},V_{\minus})$. Remark \ref{three graded realisation} then yields a representation of $\fg:={\rm TKK}(V)$, as in \eqref{TKKV}, on $S(V_{\minus})$ for any character $\lambda:\mathfrak{str}(V)\to \mR$. This extends to a representation on $\cO_{\mA(V_{\minus}^\ast)}$, where we identify $S(V_{\minus})$ with the polynomials on~$\mA(V_{\minus}^\ast)$.

Using the operators introduced in Subsection \ref{subsecJSP}, we can rewrite that representation
$$\pi: {\rm TKK}(V)=V_{\plus}\oplus \mathfrak{str} (V)\oplus V_{\minus}\;\;\;\to\;\;\; \wA(V_{\minus})\,\subset\, \Gamma(\cD_{\mA(V_{\minus}^\ast)})$$
into the following form:
\begin{enumerate}[(i)'] 
\item $\pi(0,0,u) = u\qquad\qquad\quad\qquad\qquad\qquad\qquad$ for $u\in V_{\minus}$
\item $\pi(0,D_{x,y},0) =  \lambda(D_{x,y})+\sum_{i}D^\ast_{x,y}(e_i)\partial^i \qquad\qquad \qquad$ for $x\in V_{\plus}$, $y\in V_{\minus}$
\item $\pi(v,0,0)= \sum_{i} \lambda(D_{v,e_i})\partial^i -\sum_{i,j} (-1)^{\abs{v}(\abs{e_i}+\abs{e_j})}P_{e_i,e_j}(v) \partial^j \partial^i\qquad$ for $v\in V_{\plus}$.
\end{enumerate}
Here $(e_i)_i$ is a homogeneous basis of $V_{\minus}$ and $\partial^i\in \wA(V_{\minus})\subset \Gamma(\cD_{\mA(V_{\minus}^\ast)})$ the corresponding partial derivatives. For each $x\in V_{\plus}$ the expression for $\pi(x,0,0)$ in (iii)' gives a differential operator on the affine supermanifold $\mA(V_{\minus}^\ast)$, which is of the same parity as $x$, in case~$x$ is homogeneous.
We will use this expression to define the Bessel operator. This even operator is a global differential operator on $\mA(V_{\minus}^\ast)$ taking values in the super vector space $V_{\plus}^\ast$.
\begin{definition}
Consider a Jordan superpair $V=(V_{\plus},V_{\minus})$ and a character $\lambda \colon \mathfrak{str}(V) \to \mR$. For any $u,v \in V_{\minus}$ we define $\lambda_{u} \in V_{\plus}^\ast$ and $\widetilde{P}_{u,v} \in  V_{\minus}\otimes V_{\plus}^\ast $ by 
$$\lambda_{u}(x) = -\lambda(D_{x,u})\quad\mbox{and}\quad \widetilde{P}_{u,v}(x) := (-1)^{\abs{x}(\abs{u}+\abs{v})}P_{u,v}(x)\qquad\mbox{for all } x\in V_{\plus}.$$
Then we define the \textbf{Bessel operator}
\[\cB_\lambda \in \left(\Gamma (\cD_{\mA(V_{\minus}^\ast)})\otimes V_{\plus}^\ast\right)_{\oa},\quad\;\,\mbox{as}\quad\quad
\cB_\lambda= \sum_i \lambda_{e_i} \partial^i + \sum_{i,j}\widetilde{P}_{e_i,e_j} \partial^j\partial^i.
\]
\end{definition}
In particular, by construction, we find
\[\cB_\lambda(x)=-\pi(x,0,0)\qquad \forall x\in V_{\plus} .\]

These Bessel operators and the representation in (i)'-(iii)' can be viewed as a generalisation to the Jordan superpair setting of the construction in~\cite{BesselOperators}, as we will argue in the next subsection.
Proposition 1.4 in~\cite{BesselOperators} generalises to our setting.
\begin{proposition}
Set $M=\mA(V_{\minus}^\ast)$. The Bessel operator satisfies the following:
\begin{enumerate}[(i)]
\item The family of operators  $\mathcal{B}_\lambda (x) \in \Gamma(\cD_M) $ for $x \in V_{\plus}$, supercommutes for fixed $\lambda$. 
\item For any $\phi,\psi\in \cO_M(U)$, with some open $U\subset M_0$, we have the product rule
\[
\mathcal{B}_\lambda(\phi\psi) = \mathcal{B}_\lambda(\phi) \psi + \phi \mathcal{B}_\lambda(\psi) + 2 \sum_{i,j} (-1)^{\abs{e_i}\abs{\phi}} \widetilde{P}_{e_i,e_j}  \partial^j  (\phi) \partial^i (\psi).
\]
\end{enumerate}
\end{proposition}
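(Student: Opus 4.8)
The plan is to prove the two statements by completely different means: part (i) follows formally from the fact that $\pi$ is a representation, while part (ii) is a direct super-Leibniz computation whose only structural input is the supersymmetry of $\widetilde{P}$.

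For part (i), I would avoid any manipulation of the explicit second-order operators. Since $\cB_\lambda(x)=-\pi(x,0,0)$ for all $x\in V_{\plus}$, and $\pi\colon {\rm TKK}(V)\to\wA(V_{\minus})$ is a Lie superalgebra morphism for the supercommutator bracket, we get
\[
[\cB_\lambda(x),\cB_\lambda(y)]=[\pi(x,0,0),\pi(y,0,0)]=\pi\big([(x,0,0),(y,0,0)]\big).
\]
In ${\rm TKK}(V)$ the bracket of two elements of $V_{\plus}$ vanishes (this is exactly the relation $[x,y]=0$ in the list defining the {\rm TKK} bracket, reflecting $[\mg_{\plus 1},\mg_{\plus 1}]=0$ in a $3$-grading), so the right-hand side is $\pi(0)=0$. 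Thus the family $\{\cB_\lambda(x)\}_{x\in V_{\plus}}$ supercommutes, and the only ingredient is that $\pi$ is a genuine representation.

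For part (ii), the starting point is that $\cB_\lambda$ is a $V_{\plus}^\ast$-valued differential operator of order at most two with no zeroth-order part, so in particular $\cB_\lambda(1)=0$. I would apply $\cB_\lambda$ to $\phi\psi$ by expanding each $\partial^i$ and each $\partial^j\partial^i$ with the super-Leibniz rule, keeping in mind that $\partial^i$ carries parity $\abs{e_i}$. The terms in which every derivative falls on $\phi$ reassemble into $\cB_\lambda(\phi)\psi$, and the terms in which every derivative falls on $\psi$ reassemble into $\phi\,\cB_\lambda(\psi)$ once $\phi$ is moved to the left of the superconstant coefficients $\lambda_{e_i}$ and $\widetilde{P}_{e_i,e_j}$; crucially, the Koszul sign produced by carrying $\phi$ past $\partial^i$ (resp. $\partial^j\partial^i$) equals the one produced by carrying it past $\lambda_{e_i}$ of parity $\abs{e_i}$ (resp. $\widetilde{P}_{e_i,e_j}$ of parity $\abs{e_i}+\abs{e_j}$), so these two groups recombine without residual signs. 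The first-order part $\sum_i\lambda_{e_i}\partial^i$ contributes nothing else, so the entire correction term originates from the second-order part.

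The heart of the argument is that the two families of cross terms (one derivative on $\phi$, the other on $\psi$) coincide and hence double. For this I would first record the supersymmetry $\widetilde{P}_{e_i,e_j}=(-1)^{\abs{e_i}\abs{e_j}}\widetilde{P}_{e_j,e_i}$: writing $P_{x,y}(z)=(-1)^{\abs{y}\abs{z}}\tfrac12\{x,z,y\}$ and applying the outer-variable symmetry $\{x,z,y\}=(-1)^{\abs{x}\abs{z}+\abs{z}\abs{y}+\abs{y}\abs{x}}\{y,z,x\}$ yields $P_{x,y}=(-1)^{\abs{x}\abs{y}}P_{y,x}$, which the definition of $\widetilde{P}$ transfers verbatim. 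The two cross terms coming from $\partial^j\partial^i(\phi\psi)$ are $\sum_{i,j}\widetilde{P}_{e_i,e_j}(-1)^{\abs{e_j}(\abs{e_i}+\abs{\phi})}\partial^i(\phi)\partial^j(\psi)$ and $\sum_{i,j}\widetilde{P}_{e_i,e_j}(-1)^{\abs{e_i}\abs{\phi}}\partial^j(\phi)\partial^i(\psi)$; relabelling $i\leftrightarrow j$ in the first and inserting the supersymmetry relation turns it into the second, so their sum is the stated $2\sum_{i,j}(-1)^{\abs{e_i}\abs{\phi}}\widetilde{P}_{e_i,e_j}\partial^j(\phi)\partial^i(\psi)$. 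I expect the main obstacle to be the sign bookkeeping throughout part (ii): simultaneously tracking the parities of $\partial^i$, of the coefficients $\lambda_{e_i}\in V_{\plus}^\ast$ and $\widetilde{P}_{e_i,e_j}\in V_{\minus}\otimes V_{\plus}^\ast$, and of $\phi,\psi$, and checking that the ``all on $\psi$'' terms genuinely assemble into $\phi\,\cB_\lambda(\psi)$ rather than a sign-twisted variant. Proving the supersymmetry of $\widetilde{P}$ cleanly at the outset is precisely what makes the cross terms collapse, so I would isolate that as a preliminary lemma.
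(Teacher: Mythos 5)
Your proposal is correct and follows essentially the same route as the paper: part (i) via $[\pi(x_1),\pi(x_2)]=\pi([x_1,x_2])=0$ for $x_1,x_2\in V_{\plus}$ in a $3$-graded Lie superalgebra, and part (ii) via the super-Leibniz expansion with the cross terms collapsing through the relabelling $i\leftrightarrow j$ and the supersymmetry $P_{e_i,e_j}=(-1)^{\abs{e_i}\abs{e_j}}P_{e_j,e_i}$. The only difference is presentational: you isolate and derive that supersymmetry as a preliminary lemma, whereas the paper simply invokes it at the end of the computation.
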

\begin{proof}
For the first statement, it suffices to note that for $x_1$ and $x_2$  in $V_{\plus}$  we have \[[\pi(x_1),\pi(x_2)]=0,\]
which follows by construction for a representation of a $3$-graded Lie superalgebra.

For the second statement, we have
\begin{align*}
\mathcal{B}_\lambda ( \phi \psi)&=  \sum_i \lambda_{e_i} \partial^i  (\phi \psi) +  \sum_{i,j}  \widetilde{P}_{ e_i,e_j} \partial^j \partial^i   (\phi\psi)
\\
&=\sum_i  \lambda _{e_i} \partial^i   (\phi) \psi+  \sum_{i,j}   \widetilde{P}_{ e_i,e_j} \partial^j (\partial^i   (\phi))\psi
\\ & +  \sum_i \lambda_{e_i} (-1)^{\abs{e_i}\abs{\phi}} \phi \partial^i   ( \psi) +  \sum_{i,j} (-1)^{(\abs{e_i}+\abs{e_j})\abs{\phi}}  \widetilde{P}_{ e_i,e_j} \phi \partial^j \partial^i   (\psi)
\\ &
 + \sum_{i,j} (-1)^{\abs{e_i}\abs{\phi}}  \widetilde{P}_{ e_i,e_j} \partial^j  (\phi)\partial^i  (\psi)
 \\ &
  + \sum_{i,j} (-1)^{\abs{e_j}(\abs{e_i}+\abs{\phi})}  \widetilde{P}_{ e_i,e_j} \partial^i  (\phi)\partial^j  (\psi)
\\
&= \mathcal{B}_\lambda(\phi) \psi + \phi \mathcal{B}_\lambda(\psi) + 2 \sum_{i,j} (-1)^{\abs{e_i}\abs{\phi}} \widetilde{P}_{ e_i,e_j}\partial^j  (\phi) \partial^i (\psi),
\end{align*}
where we used $P_{e_i,e_j}=(-1)^{\abs{e_i}\abs{e_j}}P_{e_j,e_i}$.
\end{proof}

\begin{remark}
If we consider realisation (i)'-(iii)' for the simple Jordan superalgebras D in Section 3 of \cite{Kac}, we obtain the Fourier transform of the realisations of the orthosymplectic superalgebras constructed in Theorem~5.7 of \cite{CSS}.
\end{remark}

\subsection{A special case}

Now assume that $V=(J,J)$, as in Subsection \ref{secJalg}, with $J$ a unital simple Jordan {\em algebra}. We will use concepts and nomenclature as in \cite{FK}. We then make the extra assumption on $J$ that the eigenspace for eigenvalue $+1$ of the {\em Cartan involution} $\alpha$ of $J$ is a simple Jordan algebra. We also consider the symmetric bilinear form $\tau:J\times J\to \mR$ known as the trace form, which is non-degenerate under the above assumptions. Furthermore we define the symmetric bilinear form $(\cdot|\cdot)=\tau(\cdot,\alpha\cdot)$, which is positive definite. We denote the dimension of $J$ by $n$ and its rank by $r$.

In order to compare the realisation (i)'-(iii)' with the one in \cite{BesselOperators}, we need to adjust to the convention in \cite{BesselOperators}, which considers a realisation of $\mathfrak{co}(J)$ on $\cO_{\mA(J)}$, rather than on $\cO_{\mA(J^\ast)}$. Therefore we define an isomorphism of vector spaces 
$$D: J\to J^\ast;\qquad v\mapsto \tau(v,\cdot) \qquad\forall v\in J, $$
which extends to an isomorphism $D: S(J)\;\tilde\to\; S(J^\ast)$. The representation $\pi$ on $S(J)$ in (i)'-(iii)' then leads to one on $S(J^\ast)$, defined as $D\circ \pi\circ D^{\minus 1}$, which we also denote by $\pi$. This yields
\begin{enumerate}[(i)'']
\item $\pi(0,0,u)=\tau(u,\cdot)$
\item $\pi(0,D_{x,y},0)=\lambda(D_{x,y})-\sum_{i=1}^{n}\tau(D_{y,x}(e_i),\cdot)\partial^i$
\item $\pi(v,0,0)=\sum_{i=1}^{n} \lambda(D_{v,e_i})\partial^i-\sum_{i,j}\tau( P_{e_i,e_j}(v),\cdot)\partial^j\partial^i$
\end{enumerate}
for $u,x,y,v\in J$, where the partial derivatives $\partial^i\in\Gamma(\cD_{\mA(J)})$ are taken with respect to the basis $\tau(e_i,\cdot)$ of $J^\ast$.

Now we make the further assumption that the character $\lambda:\mathfrak{str}(J)\to \mR$ is of the form 
$$\lambda\;=\;-\frac{r}{2n}\lambda_0\Tr,$$ with $\Tr$ the trace of operators on $J$ and $\lambda_0\in\mR$. The motivation to include the factor $r/2n$ comes from the observation that expression \eqref{expression D for Jordan algebra} of the operator $D_{x,y}$ and Proposition III.4.2 in~\cite{FK} imply that for any $D_{x,y}\in\mathfrak{str}(J)$
\[
\Tr(D_{x,y})=2\Tr(L_{x\ast y}) =\frac{2n}{r} \tau(x,y).
\]
Furthermore we also have $\tau(D_{x,y} u, v) =  \tau(u, D_{y,x} v) $
and $\tau(P_{x,y}u,v) =  \tau(P_{x,y}v,u),$ which follows easily if one uses the associativity of the trace form $\tau$, see Proposition II.4.3 in \cite{FK}, and the expressions given in \eqref{expression D for Jordan algebra} for $D_{x,y}$ and $P_{x,y}$.

The Cartan involution $\theta$, which is an involutive automorphism of $\mathfrak{co}(J)$, see e.g. Section 2.1.1 in \cite{BesselOperators} is given by
\[
\theta(u,D_{x,y},v)=(-\alpha v, -D_{\alpha y, \alpha x},-\alpha u).
\]
Then we rewrite the above representation in terms of $(\cdot|\cdot)$ and compose it with the Cartan involution, yielding another representation $\pi'$ which takes the form we describe below.

\begin{scholium}\label{schol}
The representation in Remark \ref{three graded realisation} contains as a special case the following situation.
For any real Jordan algebra $J$, with assumptions as above, the Lie algebra $\mathfrak{co}(J)$  admits a representation~$\pi'$ on smooth functions on the manifold $J$, given by
\begin{enumerate}[(I)]
\item $\pi'(u,0,0)=-(u|\cdot)$
\item $\pi'(0,D_{x,y},0)=\frac{r\lambda_0}{2n} \Tr(D_{\alpha y,\alpha x})+\sum_{i=1}^{n}\tau(e_i, D_{\alpha y,\alpha x}\cdot)\partial^i$
\item $\pi'(0,0,v)=\lambda_0\sum_{i=1}^{n}  ( v|e_i)\partial^i+\sum_{i,j}(v|P_{e_i,e_j}\cdot)\partial^j\partial^i$,
\end{enumerate} for all $u,x,y,v\in J$.
This is the representation occurring in Section~2.2 in~\cite{BesselOperators}. Note that the action of $\mathfrak{co}(J)_{\pm 1}$ needs to be multiplied with an auxiliary constant $\mp \imath$ in order to get the exact same expressions. Furthermore we should point out that in \cite{BesselOperators} it is shown that this is not just a representation on functions on $J$, but that it also restricts to functions on certain orbits of the structure group.
\end{scholium}

\section{A realisation of the Lie superalgebra $D(2,1;\alpha)$}\label{secD}
In this section we give polynomial realisations for the one parameter family of Lie superalgebras $D(2,1;\alpha)$ and discuss the reducibility of the corresponding representation on polynomials. We also comment on the other exceptional basic classical Lie superalgebras. In this section we set $\mK=\mC$.

\subsection{Realisations of the exceptional Lie superalgebras}

The Weyl superalgebra $\wA(V)$, as a super vector space, inherits a $\mZ\times \mZ$-grading from the natural gradings on $S(V)$ and $S(V^\ast)$ by equation~\eqref{ASS}.

\begin{proposition}$\mbox{}$
\begin{enumerate}[(i)]
\item For any $\alpha\in\mC\backslash\{0,-1\}$, the Lie superalgebra $D(2,1;\alpha)$ admits a one parameter family of realisations in $\wA:=\wA(\mC^{2|2})$, which are contained in \[\wA_{0,0}\oplus\wA_{1,0}\oplus\wA_{1,1}\oplus\wA_{0,1}\oplus\wA_{1,2}.\]
One of those realisations is inside $\wA_{1,0}\oplus\wA_{1,1}\oplus\wA_{1,2}.$
\item The Lie superalgebra $G(3)$ admits a one parameter family of realisations in $\wA:=\wA(\mC^{1|7})$, which are contained in
\[\wA_{0,0}\oplus\wA_{1,0}\oplus\wA_{1,1}\oplus\wA_{0,1}\oplus\wA_{1,2}\oplus \wA_{0,2}\oplus \wA_{1,3}.\]
One of those is inside $\wA_{1,0}\oplus\wA_{1,1}\oplus\wA_{1,2} \oplus \wA_{1,3}.$

\item The Lie superalgebra $F(4)$ admits a one parameter family of realisations in $\wA:=\wA(\mC^{6|4})$, which are contained in $$\wA_{0,0}\oplus\wA_{1,0}\oplus\wA_{1,1}\oplus\wA_{0,1}\oplus\wA_{1,2}.$$
One of those realisations is inside $\wA_{1,0}\oplus\wA_{1,1}\oplus\wA_{1,2}.$
\end{enumerate}
\end{proposition}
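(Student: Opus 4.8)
The plan is to realise all three families as instances of the general construction in Theorem~\ref{Polynomial realisation of a graded Lie algebra}, applied to a well-chosen $\mZ$-grading of each algebra, and then to read off the bigrading support from the explicit formula for $\pi_\lambda$. First I would fix, for each $\fg$, a grading element $\xi$ in a Cartan subalgebra so that the eigenspaces of $\ad\xi$ give $\fg_{\minus}$ of the prescribed superdimension. For $D(2,1;\alpha)$ and $F(4)$ this is a $3$-grading, with $\fg_{\minus}=\fg_{\minus 1}$ of superdimension $(2|2)$, respectively $(6|4)$; here $\fg$ is Jordan graded, so by Proposition~\ref{TKK} one may equally view $\fg={\rm TKK}(\fg_{\plus 1},\fg_{\minus 1})$ for an explicit Jordan superpair (for $D(2,1;\alpha)$ one finds $\fg_0\cong\mathfrak{gl}(2|1)$). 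For $G(3)$ no such $3$-grading exists: since $G_2$ is not $3$-gradable, any grading element must lie along the Cartan of the $\mathfrak{sl}(2)$-factor of $G(3)_{\oa}=G_2\oplus\mathfrak{sl}(2)$, and normalising $\xi$ so the odd weight spaces sit in degrees $\pm1$ forces the root vectors of $\mathfrak{sl}(2)$ into degrees $\pm2$. This yields a depth-$2$ (contact) $5$-grading with $\fg_{\minus 1}$ purely odd of dimension $7$ and $\fg_{\minus 2}$ one-dimensional even, so $\fg_{\minus}$ has superdimension $(1|7)$, matching $\wA(\mC^{1|7})$; likewise $\fg_{\minus}$ matches $\wA(\mC^{2|2})$ and $\wA(\mC^{6|4})$ in the other two cases.

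Next I would produce the family and its injectivity. For each grading $\fg_0$ is reductive, and one checks that its space of characters $\lambda\colon\fg_0\to\mK$ is one-dimensional (spanned by the character dual to $\xi$, every character vanishing on $[\fg_0,\fg_0]$ and on $\fg_0^{\ob}$); applying Theorem~\ref{Polynomial realisation of a graded Lie algebra} to each $\lambda$ then gives the asserted one-parameter family $\pi_\lambda\colon\fg\to\wA(\fg_{\minus})$. Injectivity is uniform in $\lambda$ and follows from simplicity of $\fg$: by Theorem~\ref{ThmGen}(ii) it suffices that $U(\fg)\otimes_{U(\fk)}\mK_\lambda$ be faithful, and since $\fg_{\minus}$ acts freely on the cyclic vector under $U(\fg)\otimes_{U(\fk)}\mK_\lambda\cong S(\fg_{\minus})$, the kernel of $\fg\to\End(S(\fg_{\minus}))$ is an ideal not containing $\fg_{\minus}$, hence is zero. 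This is exactly where the hypothesis $\alpha\in\mC\setminus\{0,-1\}$ enters, guaranteeing that $D(2,1;\alpha)$ is simple.

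The bigrading support is then read off from the explicit formulas. Under \eqref{ASS} the coordinate-degree (first index) of every monomial in Theorem~\ref{Polynomial realisation of a graded Lie algebra} is at most $1$, since each summand is either a scalar $\lambda(H_{i,K_1,\ldots,K_j})$ or a single element of $\fg_{\minus}$ times a string of derivatives; moreover \emph{all} $\lambda$-dependence sits in the coordinate-degree-$0$ pieces $\wA_{0,k}$. For the $3$-graded cases Remark~\ref{three graded realisation} gives directly $\pi_\lambda(\fg)\subset\wA_{0,0}\oplus\wA_{1,0}\oplus\wA_{1,1}\oplus\wA_{0,1}\oplus\wA_{1,2}$, so choosing $\lambda=0$ kills $\wA_{0,0}$ and $\wA_{0,1}$ and lands the distinguished member inside $\wA_{1,0}\oplus\wA_{1,1}\oplus\wA_{1,2}$. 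For $G(3)$ the depth-$2$ grading activates the higher terms of the recursion $W_{i,K_1,\ldots,K_j},H_{i,K_1,\ldots,K_j}$, and the same bookkeeping produces the larger list, with $\lambda=0$ landing inside $\wA_{1,0}\oplus\wA_{1,1}\oplus\wA_{1,2}\oplus\wA_{1,3}$.

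The hard part will be the sharp bound on the derivative order for $G(3)$. Assigning weight $\minus d$ to a coordinate dual to $\fg_{\minus d}$ and weight $\plus d$ to its derivative, $\pi_\lambda(\fg_p)$ has weight $p$, so a coordinate-degree-$1$ term of $\pi_\lambda(\fg_2)$ built on a $\fg_{\minus 2}$-coordinate could \emph{a priori} carry four derivatives, i.e.\ lie in $\wA_{1,4}$. Weight counting alone does not exclude this, and neither does $G_2$-representation theory: via the (co)associative form, $\Lambda^4\fg_{\minus 1}\cong\Lambda^4 V_7$ does contain a trivial $G_2$-summand, and $\fg_{\minus 2}$ is $G_2$-trivial, so a nonzero equivariant candidate exists. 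One must therefore show that the actual coefficient $W_{i,K_1,\ldots}(e)\in\fg_{\minus 2}$ of every four-derivative monomial vanishes. I expect this to follow from the interplay of the vanishing of the odd Bernoulli numbers ($C_{2k+1}=0$ for $k\ge1$, which already annihilates the single-step contributions with $|K_1|=3$) with the precise iterated-bracket structure of the contact grading, and to require either an explicit evaluation of the surviving multi-step terms or an equivariance argument isolating the trivial $G_2$-component — a computation most naturally carried out in the concrete model of $G(3)$.
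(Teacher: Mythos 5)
Your overall strategy coincides with the paper's: apply Theorem \ref{Polynomial realisation of a graded Lie algebra} to the $3$-gradings of $D(2,1;\alpha)$ and $F(4)$ (with $\fg_{\minus}$ of dimension $2|2$, respectively $6|4$) and to the $5$-grading of $G(3)$ (with $\fg_{\minus 2}$ one-dimensional even and $\fg_{\minus 1}$ seven-dimensional odd), note that $\fg_0$ has a one-dimensional centre so that the characters form a one-parameter family, and read off the bigrading support of $\pi_\lambda$ from the explicit formulas, with $\lambda=0$ killing the $\wA_{0,k}$ pieces. The choice of gradings, the parameter count, and the $d=1$ bookkeeping via Remark \ref{three graded realisation} are all correct, and your injectivity argument (the kernel is an ideal meeting $\fg_{\minus}$ trivially because $\pi_\lambda(X)\cdot 1=X$, hence zero by simplicity) supplies a step the paper leaves implicit.

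The gap is the one you flag yourself: part (ii) is not actually proved. For $X\in\fg_{2}$ of $G(3)$ the weight count permits a component in $\wA_{1,4}$, coming from the two-step term $W_{i,K_1,K_2}\,\partial^{K_2}\partial^{K_1}\partial^{i}$ with $\abs{K_1}=1$, $\abs{K_2}=2$ and all indices in $\fg_{\minus 1}$; as you note, the one-step term with $\abs{K_1}=3$ dies because $C_3=-B_3=0$, but $C_1C_2\neq 0$, and $G_2$-equivariance cannot force vanishing since $\Lambda^4 V_7$ contains a trivial summand. You then stop, saying the required vanishing is ``expected to follow'' from a computation you do not perform; announcing that a coefficient ought to vanish and listing strategies is not a proof, so your argument only establishes containment with up to four derivatives for $G(3)$. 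Be aware that the paper does not close this either: its proof asserts a general containment with at most $d+1$ derivatives in the coordinate-degree-one part, which follows from Theorem \ref{Polynomial realisation of a graded Lie algebra} by weight counting only for $d=1$; for $d=2$ the same counting gives the bound $2d=4$, so the $G(3)$ claim rests precisely on the term you isolated. To finish, you must evaluate the antisymmetrisation of $[[[[X,X_i],X_l],X_a],X_b]$ over the odd indices in the contact grading of $G(3)$ explicitly; the contribution through the $\mathfrak{sl}(2)$-part of $\fg_0$ dies because the invariant form on $V_7$ is symmetric, but the contribution through the projection $\Lambda^2V_7\to\fg_2^{({\rm ad})}$ is antisymmetric in both pairs and its image in $\Lambda^4V_7$ is the invariant four-form, so you should be prepared for the possibility that this term does not vanish and that the correct support for $G(3)$ includes $\wA_{1,4}$.
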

\begin{proof}
Consider a simple $\mZ$-graded Lie superalgebra $\fg=\oplus_{|j|\le d}\,\fg_{j}$, where~$\fg_0$ is the direct sum of its (even) centre $\mathfrak{z}$ and some simple Lie superalgebras and set $k=\dim\mathfrak{z}$. Theorem \ref{Polynomial realisation of a graded Lie algebra} then implies that $\fg$ has a $k$ parameter family of realisations in $\wA:=\wA(\fg_{\minus})$, which is contained in
\[(\oplus_{0\le i \le d-1}\wA_{1,i})\;+\;(\wA_{0,0}\oplus\wA_{1,1})\;+\;(\oplus_{1\le i\le d+1 }\wA_{1,i}\;\oplus\;\oplus_{1\le j\le d}\wA_{0,j}).\]
In the particular case that $\lambda=0$, the realisation is actually contained in 
$$(\oplus_{0\le i \le d-1}\wA_{1,i})\;+\;\wA_{1,1}\;+\;(\oplus_{1\le i\le d+1 }\wA_{1,i}).$$

Now $D(2,1;\alpha)$ has a 3-term grading with $\fg_0=\mathfrak{osp}(2|2)\oplus\mC$ and $\mathfrak{z}=\mC$, which will be considered explicitly in Subsection \ref{subsecD}, the results follow then from using $d=1$ in the above formulae.

The Lie superalgebra $G(3)$ has a 5-term grading with $\fg_0\cong G(2)\oplus\mC$ and $\mathfrak{z}=\mC$, see Section 2.19 in \cite{dictionary}, where~$\mg_{\minus 2}$ is one dimensional and even and $\mg_{\minus 1}$ is seven dimensional and purely odd. Setting $d=2$ in the above formulae, we get the result for $G(3)$.

For the last case, we remark that from \cite{Kac} it follows that $F(4)$ is the Tits-Kantor-Koecher Lie superalgebra associated with the Jordan superalgebra $F$ which has dimension $(6|4)$. The 3-term grading coming from this ${\rm TKK}$-construction, can be derived from Proposition 1(I) in~\cite{Kac}. The even and odd roots of $F(4)$ are given by 
\[
\Delta_0 = \{ \pm \delta, \pm \epsilon_{i}\pm \epsilon_{j}, \pm \epsilon_i \} \quad i,j \in \{1,2,3\}, \qquad \qquad \Delta_1 = \{  \frac{1}{2}(\pm \delta \pm \epa \pm \epb \pm \epc) \}.
\]

Then the 3-term grading is given by
\begin{align*}
\fg_{\plus} &= \{ X_{\epa},X_{\delta}, X_{\epa\pm\epb},X_{\epa\pm \epc},X_{\frac{1}{2}(\delta+\epa\pm\epb\pm\epc)}\} \\
\fg_{\minus} &=\{ X_{-\epa},X_{-\delta}, X_{-\epa\pm\epb},X_{-\epa\pm \epc},X_{-\frac{1}{2}(\delta+\epa\pm\epb\pm\epc)}\} \\
\fg_0 &=  \{   H_\delta, H_{\epa}, H_{\epb}, H_{\epc},X_{\pm\epb},X_{\pm\epc},X_{\pm\epb\pm\epc} X_{\frac{1}{2}(\delta-\epa\pm\epb\pm\epc)},X_{\frac{1}{2}(-\delta+\epa\pm\epb\pm\epc)} \}.
\end{align*}
We have that $\mg_0= \mathfrak{osp}(2|4)\oplus \mathfrak{z}$. The center is given by $\mathfrak{z}=H_\delta+H_\epa$, hence one-dimensional. Again setting $d=1$, we also obtain the last result.
\end{proof}

The specific form of the realisation of $D(2,1;\alpha)$ inside $\wA_{1,0}\oplus\wA_{1,1}\oplus\wA_{1,2}$ is given in the following proposition.
\begin{proposition}
Consider the differential operators 
\begin{align*}\mE&=x\partial_x+y\partial_y+\eta\partial_\eta+\theta\partial_\theta, & \Delta&=\partial_x\partial_y+\partial_\eta\partial_\theta,\\
\mE_\alpha&=\alpha x\partial_x+y\partial_y+\alpha\eta\partial_\eta+\theta\partial_\theta, & \Delta_\alpha&=\partial_x\partial_y+\alpha\partial_\eta\partial_\theta
\end{align*}
on $\mA^{2|2}$. A realisation of $D(2,1;\alpha)$ is given by the operators
\begin{gather*} x,y,\theta,\eta, \\
 \theta\partial_\theta-\eta\partial_\eta, x\partial_x-y\partial_y, \mE,\theta\partial_\eta,\eta\partial_\theta, \eta\partial_y+\alpha x\partial_\theta, \theta\partial_x-y\partial_\eta,\theta\partial_y-\alpha x\partial_\eta,\eta\partial_x+y\partial_\theta
\end{gather*}
and
\[ \mE\partial_x-y\Delta,\;\, \mE\partial_y-x\Delta_\alpha,\;\,\mE_\alpha\partial_\theta+\eta\Delta,\;\,\mE_\alpha\partial_\eta-\theta\Delta_\alpha.\]
\end{proposition}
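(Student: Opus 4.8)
The plan is to read the three families of operators straight off the $3$-graded specialisation recorded in Remark~\ref{three graded realisation}, applied to $\fg=D(2,1;\alpha)$ with its grading $\fg=\fg_{\minus 1}\oplus\fg_0\oplus\fg_{\plus 1}$ in which $\fg_0\cong\mathfrak{osp}(2|2)\oplus\mC$, and with the trivial character $\lambda=0$. This is exactly the choice that lands the realisation in $\wA_{1,0}\oplus\wA_{1,1}\oplus\wA_{1,2}$, as isolated in the previous proposition. First I would fix, as in Subsection~\ref{subsecD}, an explicit homogeneous basis of $\fg$ adapted to the grading, choosing a basis $X_1,X_2$ (even) and $X_3,X_4$ (odd) of the $(2|2)$-dimensional space $\fg_{\minus 1}$ and declaring the associated coordinates on $S(\fg_{\minus 1})$ to be $x,y,\theta,\eta$. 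Since $\fg_{\minus 1}$ acts by multiplication under part~(i) of Remark~\ref{three graded realisation}, the operators $x,y,\theta,\eta$ are immediate, and $\partial_x,\partial_y,\partial_\theta,\partial_\eta$ are the dual derivations.

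The second family, attached to $\fg_0$, comes from part~(ii) of Remark~\ref{three graded realisation}, which at $\lambda=0$ reads $\pi(X)=\sum_i[X,X_i]\partial^i$ and is therefore just the linear vector field encoding the adjoint action of $\fg_0$ on $\fg_{\minus 1}$. Running over a basis of $\fg_0\cong\mathfrak{osp}(2|2)\oplus\mC$, the grading element gives $\mE$, the Cartan of the $\mathfrak{so}(2)$-block acting on $x,y$ gives $x\partial_x-y\partial_y$, and the $\mathfrak{sl}(2)$-block acting on $\theta,\eta$ gives the triple $\theta\partial_\theta-\eta\partial_\eta$, $\theta\partial_\eta$, $\eta\partial_\theta$; the four odd elements of $\mathfrak{osp}(2|2)$ pair the even and odd coordinates, producing the mixed operators such as $\eta\partial_y+\alpha x\partial_\theta$. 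The deformation parameter $\alpha$ enters precisely here, through the structure constants governing how the odd part of $\fg_0$ intertwines the two $2$-dimensional pieces of $\fg_{\minus 1}$.

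The third family, attached to $\fg_{\plus 1}$, is where the real work lies: part~(iii) of Remark~\ref{three graded realisation} with $\lambda=0$ gives the purely second-order operators $\pi(X)=\tfrac12\sum_{i,j}[[X,X_i],X_j]\,\partial^j\partial^i$, and I would compute the double brackets $[[X,X_i],X_j]\in\fg_{\minus 1}$ for each of the four basis elements $X$ of $\fg_{\plus 1}$. The bookkeeping of the Koszul signs (the $X_i$ have mixed parity) together with the $\alpha$-weighting is the main obstacle: one must check that the resulting sums collapse to the compact expressions $\mE\partial_x-y\Delta$, $\mE\partial_y-x\Delta_\alpha$, $\mE_\alpha\partial_\theta+\eta\Delta$ and $\mE_\alpha\partial_\eta-\theta\Delta_\alpha$, and in particular that the deformed operators $\mE_\alpha$ and $\Delta_\alpha$ surface on exactly the odd generators and in exactly the combinations shown. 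I expect this to reduce, after expanding, to a short list of identities among the structure constants of $D(2,1;\alpha)$.

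Finally, closure under the supercommutator requires no separate check: Remark~\ref{three graded realisation} is a specialisation of Theorem~\ref{Polynomial realisation of a graded Lie algebra}, so $\pi$ is a Lie superalgebra morphism by construction, and the computation above merely identifies its image. It remains to note injectivity. Since the kernel of $\pi$ is an ideal of $\fg$ and $D(2,1;\alpha)$ is simple for $\alpha\notin\{0,-1\}$, while $\pi\neq0$, the kernel is zero; equivalently, one checks via Theorem~\ref{ThmGen}(ii) that the scalar parabolic Verma module at $\lambda=0$ is faithful. Hence the listed operators form a polynomial realisation of $D(2,1;\alpha)$.
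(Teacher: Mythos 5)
Your proposal follows essentially the same route as the paper: the paper proves this proposition by carrying out the explicit realisation $\pi_\lambda$ of Subsection~\ref{subsecD} for the stated $3$-grading (with $\fg_0\cong\mathfrak{osp}(2|2)\oplus\mC$) and specialising to $\lambda=0$, which is exactly your plan of evaluating Remark~\ref{three graded realisation} at the trivial character; the bracket computations you defer are precisely the ones the paper records there. Your closing observation that injectivity is automatic from simplicity of $D(2,1;\alpha)$ is a correct (and slightly more explicit) justification of the word ``realisation''.
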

This will be obtained as a special case of the realisations considered in the next subsection.

\subsection{The realisations of $D(2,1;\alpha)$}\label{subsecD}

These~$17$-dimensional Lie superalgebras of rank $3$ are deformations of $D(2,1)=\mathfrak{osp}(4|2)$. We will always asume that $\alpha \in \mC$ is different from $0$ and $-1$. In that case $D(2,1;\alpha)$ is simple. The even and odd roots are given by
\[
\Delta_0 = \{ \pm 2\dea, \pm 2\deb , \pm 2\dec \} \qquad \qquad \Delta_1 = \{ \pm \dea \pm \deb \pm \dec \}.
\]
With respect to the simple root system 
\[
\Pi = \{ 2\deb, \dea-\deb-\dec, 2\dec \}
\]
the Cartan matrix is given by
\begin{align*}
\begin{pmatrix}
2 & -1 & 0 \\
-1 & 0 & -\alpha \\
0 &-1 & 2
\end{pmatrix},
\end{align*}
see Sections 4.2  and 5.3.1  in~\cite{Musson}.
 For the values $\alpha= 1,-2,$ or $\frac{\minus 1}{2}$, it is isomorphic to $\mathfrak{osp}(4|2)$.
For more information on irreducible representations of $D(2,1;\alpha)$, see \cite{Joris}.

The bilinear form considered on $\fh^\ast$ is given by
\[
\langle \delta_i ,\delta_j \rangle=\delta_{ij}, \text{ for } i, j \in \{1,2,3\}.
\]

A basis of the Cartan subalgebra is given by $ \{H_\dea, H_\deb, H_\dec \},$ such that $\beta(H_{\delta_i})=\langle \beta,\delta_i\rangle$ for all $\beta\in\fh^\ast$. We complete this basis to a basis of $D(2,1;\alpha)$ by adding non-zero root vectors $X_\gamma$ for any $\gamma\in \Delta$, which we normalise such that
\begin{itemize}
\item $ [X_{2\deb}, X_{-2 \deb}]=H_\deb $
\item $ [X_{\dea-\deb-\dec}, X_{-\dea +\deb+\dec}]=-\frac{1+\alpha}{2} H_\dea -\frac{1}{2} H_\deb-\frac{\alpha}{2} H_\dec $
\item $ [X_{2\dec}, X_{-2\dec}]=H_\dec.$
\end{itemize}

Consider the following 3-grading of $D(2,1;\alpha)$:
\begin{align*}
\fg_{\plus} &= \{ X_{2\dec} ,  X_{2\deb}, X_{-\dea+\deb+\dec} ,X_{\dea+\deb+\dec}\} \\
\fg_{\minus} &=\{ X_{-2\dec} ,  X_{-2\deb},X_{\dea-\deb-\dec}, X_{-\dea-\deb-\dec} \}  \\
\fg_0 &=  \{   H_\dea, H_\deb,H_\dec, X_{2\dea}, X_{-\dea+\deb-\dec},X_{\dea+\deb-\dec},X_{-2\dea}, X_{\dea-\deb+\dec},X_{-\dea-\deb+\dec} \}.
\end{align*}
Hence we have $\fg_0=\mathfrak{osp}(2|2)\oplus\mC$, where the ideal $\mC$ is the centre of $\fg_0$ and $\mathfrak{osp}(2|2)$ is simple. We choose~$h:=H_{\delta_2}+H_{\delta_3}\in \fh^\ast\subset\fg_0$ and the centre of $\fg_0$ is given by $\mC h$. Hence, there is a bijection between characters $\lambda:\fg_0\to \mC$ and $\mC$, which we normalise by $\lambda\mapsto \lambda(h)$.

We consider the realisation of $D(2,1;\alpha)$ in $\wA(\Span_{\mC}(x,y,\theta,\eta)),$ where~$x,y$ are even and $\theta, \eta$ are odd, given by Remark~\ref{three graded realisation}. We add the character (complex number) $\lambda$ in the notation. For $\fg_{\minus}$ we have
\begin{align*}
\pi_\lambda(X_{-2\dec})&=x, & \pi_\lambda(X_{-2\deb})&=y ,\\ 
\pi_\lambda(X_{\dea-\deb-\dec})&=\theta, &\pi_\lambda(X_{-\dea-\deb-\dec})&=\eta.
\end{align*}
For $\fg_{0}\cong\mC\oplus\mathfrak{osp}(2|2)$ we have
\begin{align*}
   \pi_\lambda(H_\dea)&=\theta\partial_{\theta}-\eta \partial_{\eta}, & \pi_\lambda(H_{\deb})&=\lambda- 2y \partial_{y}-\theta \partial_{\theta} - \eta \partial_{\eta} \\
    \pi_\lambda(H_{\dec})&= \frac{\lambda}{\alpha} -2x\partial_{x}-\theta \partial_{\theta} - \eta \partial_{\eta}, \\
    \pi_\lambda(X_{2\dea})&=(1+\alpha) \theta \partial_{\eta},&\pi_\lambda(X_{-2\dea})&=(1+\alpha) \eta \partial_{\theta},\\
    \pi_\lambda(X_{-\dea+\deb-\dec})&=- \eta \partial_{y}-\alpha x \partial_{\theta},&\pi_\lambda(X_{\dea-\deb+\dec})&= \theta \partial_{x}-y \partial_{\eta},\\
    \pi_\lambda(X_{\dea+\deb-\dec})&= \theta \partial_{y}-\alpha x \partial_{\eta},&\pi_\lambda(X_{-\dea-\deb+\dec})&= -\eta \partial_{x}-y \partial_{\theta}.\\
\end{align*}
For $\fg_{\plus}$ we finally find
\begin{align*}
\pi_\lambda(X_{2\dec})&= \left(\frac{\lambda}{\alpha}-x\partial_{x}-\theta\partial_{\theta}-\eta\partial_{\eta} \right)\partial_x + y \partial_{\eta}\partial_{\theta}  , \\
\pi_\lambda(X_{2\deb})&=\left(\lambda - y \partial_{y} - \theta\partial_{\theta} - \eta \partial_{\eta} \right)\partial_{y} +\alpha x \partial_{\eta}\partial_{\theta},\\
 \pi_\lambda( X_{-\dea+ \deb+\dec})&=(-\lambda + \alpha x \partial_{x} +  y \partial_{y} + (1+\alpha)\eta \partial_{\eta} )\partial_{\theta} + \eta \partial_{x}\partial_{y},\\
 \pi_\lambda(X_{\dea+\deb+\dec})&= \left( \lambda -\alpha x\partial_{x} - y \partial_{y} - (1+\alpha) \theta\partial_{\theta}\right)\partial_{\eta} + \theta \partial_{x}\partial_{y}.
\end{align*}

The restriction of the canonical representation of $\wA_{2|2}$ on $S(\mC^{2|2})$ to $U(\fg)$, seen as a subalgebra through $\pi_\lambda$, leads to a representation of $\fg=D(2,1;\alpha)$ on $S(\mC^{2|2})$, which we also denote by~$\pi_\lambda$. By construction and Scholium \ref{schol}, this is an analogue for superalgebras of the conformal representations considered in \cite{BesselOperators}. Another key step in the construction in {\it op. cit.} is the fact that for certain values of the parameter $\lambda$, the operators in the realisation are tangential to specific orbits of the structure group on the Jordan algebra. Consequently, the representation on functions on $J$ is not irreducible and the representation of interest is a factor module of $\cC^\infty(J)$. The set of parameters for which this occurs is directly linked to the Wallach set, see e.g. Theorem 1.12 of \cite{BesselOperators}. This motivates the question for which $\lambda$, the representation $\pi_\lambda$ is irreducible in our example for $D(2,1;\alpha)$.

\begin{proposition}\label{propsimple}
The representation $\pi_\lambda$ of $D(2,1;\alpha)$ on $S(\mC^{2|2})$ is irreducible if and only if 
\[\lambda \not\in \mN \quad\mbox{and}\quad \lambda/\alpha\not\in\mN.\]
If either $\lambda \in \mN $ or $\lambda/\alpha\in\mN,$ the representation is indecomposable but not irreducible.
\end{proposition}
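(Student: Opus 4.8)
The plan is to recognise that $\pi_\lambda$ endows $M:=S(\mC^{2|2})$ with the structure of a lowest-weight generalised Verma module and to reduce irreducibility to the (non)existence of singular vectors, which I then classify by a short weight computation. First I would record the structural features of $M$. By Remark \ref{three graded realisation}, applied to the $3$-grading fixed in Subsection \ref{subsecD}, the elements of $\fg_{\minus}$ act by multiplication, those of $\fg_0$ by degree-preserving operators, and those of $\fg_{\plus}$ by operators lowering the total polynomial degree by one. Hence $M=\bigoplus_{d\ge 0}M_d$ is graded by total degree, $M_0=\mC\cdot 1$, and $M$ is generated by $1$, since $\pi_\lambda(X_{-2\dec})=x$, $\pi_\lambda(X_{-2\deb})=y$, $\pi_\lambda(X_{\dea-\deb-\dec})=\theta$ and $\pi_\lambda(X_{-\dea-\deb-\dec})=\eta$ produce every monomial from $1$. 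Moreover the Euler (degree) operator is internal: it equals $\tfrac12\big(\lambda(1+\alpha^{\minus 1})-\pi_\lambda(H_{\deb}+H_{\dec})\big)$, so every submodule is graded. This already gives \emph{indecomposability}: a splitting $M=A\oplus B$ into graded submodules splits $M_0$, forcing (say) $1\in A$ and hence $A\supseteq U(\fg)\cdot 1=M$. Thus $M$ is always indecomposable, and it only remains to decide irreducibility.

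Next I would reduce irreducibility to singular vectors. If $N\subset M$ is a nonzero submodule with lowest degree $d_0$, then $\pi_\lambda(\fg_{\plus})N_{d_0}\subset N_{d_0-1}=0$, so $N_{d_0}$ consists of \emph{singular vectors}, i.e. homogeneous $P$ with $\pi_\lambda(X)P=0$ for all $X\in\fg_{\plus}$. Conversely, any nonconstant singular vector $P$ of degree $d\ge 1$ generates a proper submodule: since $\fg_{\plus}$ is abelian (as $[\fg_{\plus},\fg_{\plus}]\subset\fg_2=0$) one has $U(\fg_{\plus})P=\mC P$, so by the PBW decomposition $U(\fg)P=U(\fg_{\minus})U(\fg_0)P\subset\bigoplus_{e\ge d}M_e$, which misses $M_0$. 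Therefore $\pi_\lambda$ is reducible if and only if $M$ admits a nonconstant singular vector, and the problem becomes the classification of such vectors.

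The heart of the argument, and the step I expect to be the main obstacle, is that classification. I would exploit that the four operators $\pi_\lambda(X_{2\dec})$, $\pi_\lambda(X_{2\deb})$, $\pi_\lambda(X_{-\dea+\deb+\dec})$, $\pi_\lambda(X_{\dea+\deb+\dec})$ are $\fh$-weight (root) operators; hence each $\fh$-weight component of a singular vector is again singular, so it suffices to test weight vectors. A short computation of the weights of the monomials $x^ay^b\theta^c\eta^d$ shows that the weight spaces of $M$ are at most two-dimensional: each monomial carrying exactly one of $\theta,\eta$ is alone in its weight, while a purely even weight space is spanned by the pair $\{x^ay^b,\;x^{a-1}y^{b-1}\theta\eta\}$. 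Imposing $\pi_\lambda(X)P=0$ on each such small space is then a finite, linear problem. For the one-odd-variable and pure-monomial cases a direct evaluation forces the exponents to satisfy $a=\lambda/\alpha+1$ or $b=\lambda+1$, producing exactly the singular vectors $x^{\lambda/\alpha+1}$ (when $\lambda/\alpha\in\mN$) and $y^{\lambda+1}$ (when $\lambda\in\mN$) and nothing else. The genuinely two-dimensional even spaces give a linear system in the coefficients $(c_1,c_2)$ of $c_1x^ay^b+c_2x^{a-1}y^{b-1}\theta\eta$; here the two odd raising operators yield the \emph{same} equation, and after elimination I expect the system to force $a=b=1$ together with $(\lambda-1)(\lambda-\alpha)=0$, i.e. $\lambda=1$ or $\lambda/\alpha=1$, both lying in $\{\lambda\in\mN\}\cup\{\lambda/\alpha\in\mN\}$.

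Assembling these computations completes the proof. If $\lambda\notin\mN$ and $\lambda/\alpha\notin\mN$, then no nonconstant singular vector exists, so $M$ is irreducible; if instead $\lambda\in\mN$ or $\lambda/\alpha\in\mN$, then one of $y^{\lambda+1}$, $x^{\lambda/\alpha+1}$ is a singular vector, so $M$ is reducible while remaining indecomposable by the first paragraph. The only delicate points are verifying rigorously that the weight spaces are at most two-dimensional and solving the $2\times 2$ even sector cleanly; everything else is formal, so I would isolate that even two-dimensional sector as the decisive calculation.
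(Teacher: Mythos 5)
Your strategy is essentially the paper's: indecomposability from the multiplicity-one weight space of constants together with $U(\fg_{\minus})\cdot 1=S(\mC^{2|2})$; reducibility for $\lambda\in\mN$ or $\lambda/\alpha\in\mN$ by exhibiting a nonconstant polynomial annihilated by $\fg_{\plus}$ and invoking PBW; and irreducibility otherwise reduced to the nonexistence of such singular polynomials (the paper phrases this as: for every nonzero homogeneous $P$ some $v\in\fg_{\plus}$ has $\pi_\lambda(v)P\neq0$, which lets one lower $P$ to a nonzero constant). Your weight-space bookkeeping is also correct: the weight spaces of $S(\mC^{2|2})$ are at most two-dimensional, the two-dimensional ones being spanned by $x^ay^b$ and $x^{a-1}y^{b-1}\theta\eta$, and this is a sensible way to organise the paper's ``lengthy but straightforward calculation''.

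The gap is in the reported outcome of that decisive calculation, and it matters because the ``only if'' direction rests on the \emph{completeness} of your classification of singular vectors. Your claim that the one-odd-variable sectors produce ``exactly $x^{\lambda/\alpha+1}$ and $y^{\lambda+1}$ and nothing else'' is false: for $\lambda=l\in\mN$ the monomials $y^{l}\theta$ and $y^{l}\eta$ are annihilated by all of $\pi_\lambda(\fg_{\plus})$ (e.g.\ $\pi_\lambda(X_{2\deb})(y^{l}\theta)=l(\lambda-l)\,y^{l-1}\theta$, while $\pi_\lambda(X_{2\dec})$ and $\pi_\lambda(X_{\dea+\deb+\dec})$ kill $y^l\theta$ outright), and similarly $x^{l}\theta$, $x^{l}\eta$ for $\lambda=l\alpha$. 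Likewise the two-dimensional even sector does not force $a=b=1$: the paper exhibits $y^{l-1}\theta\eta-\frac{\alpha}{l}xy^{l}$ (so $a=1$, $b=l$ arbitrary) as singular for $\lambda=l>0$, and $x^{l-1}\theta\eta-\frac{1}{l}x^{l}y$ for $\lambda=l\alpha$. As it happens, every singular vector you missed still occurs only when $\lambda\in\mN$ or $\lambda/\alpha\in\mN$, so the proposition itself is unharmed; but the computation as you describe it cannot be the one you would actually carry out, so you would need to redo the linear algebra in each weight space --- in particular keeping the cross terms $y\partial_\eta\partial_\theta$, $\alpha x\partial_\eta\partial_\theta$, $\eta\partial_x\partial_y$, $\theta\partial_x\partial_y$ that couple the two basis vectors of the even sector --- before the irreducibility half can be trusted.
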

\begin{proof}
Set $D(2,1;\alpha)=\fg$. First we note that all modules are weight modules and that the weight corresponding to the constants in $\cP:=S(\mC^{2|2})$ appears with multiplicity one. If $\cP$ would be the direct sum of two $\fg$-modules, the space of constants would hence belong to precisely one of them. However, it is clear that $U(\fg_{\minus})$-action on $1$ generates $\cP$, leading to a contradiction. Therefore the module is indecomposable. It is simple if and only if for any $P\in \cP$ we have $1\in U(\fg)\cdot P$.

Let $P$ be a homogeneous polynomial of degree $l>0$ in $\mC [x,y,\theta,\eta]$.
By a lengthy but straightforward calculation, one can show that $\mg_{\plus}$ acting trivially on $P$ forces $P$ to be zero unless $\lambda \not= i$ or $\lambda \not= i \alpha $ for some $i< l$, $i \in \mN$.

First assume that both $\lambda$ and $\lambda/\alpha$ are not in $\mN$. For any homogeneous polynomial of degree $l$, there exists an element $v \in \mg_{\plus}$ such that $\pi_\lambda(v)P$ is a non-zero homogenous polynomial of degree $l-1$. By induction, we can find $v_1, \ldots, v_l \in \mg_{\plus}$ such that $\pi_\lambda(v_1)\pi_\lambda(v_2)\cdots \pi_\lambda(v_l)P$ is a non-zero constant. For $P$ an arbitrary polynomial we can consider the homogeneous polynomial $P_{{\rm max}}$ such that the polynomial $P-P_{{\rm max}}$ is of strictly lower degree than $P$. The above argument yields an element of $U(\fg_{\plus})$ which annihilates $P-P_{{\rm max}}$ and maps $P_{{\rm max}}$ (and hence P) to a non-zero constant. We thus find that the representation is irreducible.

On the other hand, one can check directly that $\mg_{\plus}$ acts trivially on 
\[
P:= ax+b y+ c\theta+d\eta,
\]
if $\lambda =0$, on
\[
P:= ay^{l+1}+b y^{l}\theta + c y^{l}\eta + d(y^{l-1} \theta\eta-\frac{\alpha}{l} xy^{l}),
\]
if $\lambda = l>0$ and on
\[
P:=a x^{l+1} + b x^{l}\theta +c x^{l}\eta + d(x^{l-1} \theta\eta-\frac{1}{l} x^{l}y),
\]
if $\lambda = l\alpha$ with $l>0$. Here $a,b,c,d$ are arbitrary complex constants.
From the PBW-Theorem it follows that $U(\fg)\cdot P\cong U(\mg_{\minus})U(\mg_0)U(\mg_{\plus}) \cdot P$. Since $\mg_{\plus}$ acts trivially, it follows that all polynomials in $U(\fg)\cdot P$ have degree higher or equal to $P$. Therefore  $U(\fg)\cdot P$ is a proper submodule of $\cP$.
\end{proof}

We conclude this section by focusing on the specific cases $\lambda=1$ and $\lambda=\alpha$, as in the spirit of the above discussion the top of that module seems the first candidate for the `minimal representation' of $D(2,1;\alpha)$. 

First assume that $\alpha=1$, then the action of $\mathfrak{osp}(2|2)$, the semisimple part of $\fg_0$, on~$\cP$ reduces to the one studied in~\cite{osp}. In particular it was derived that in this case the space~$\cP_2$ of homogeneous polynomials is indecomposable. This self-dual module has a simple socle given by the trivial representation generated by the polynomial $R^2=xy+\eta\theta$. The calculations in the proof of Proposition \ref{propsimple} illustrate that the polynomials of degree 2 which generate the $D(2,1;1)$-submodule of $\cP$ constitute a subspace of codimension 1. This is precisely the radical of the $\mathfrak{osp}(2|2)$-module $\cP_2$, namely the solutions of the Laplace equation. 

Now return to the case~$\lambda=1$ or $\lambda=\alpha$ with $\alpha\not=1$. In this case the structure of the $\fg_0$-module clearly changes. There is no longer a 1-dimensional submodule. But in both cases there is a 5-dimensional submodule which generates the $D(2,1;\alpha)$-submodule of $\cP$. This 5-dimensional submodule is generated either by $R^2=xy+\eta\theta$, if $\lambda=\alpha$, or $R^2_\alpha=\alpha xy+\eta\theta$, if $\lambda=1$. 

\appendix
\section{Three technical lemmata}\label{technical}

In this section we obtain several technical results concerning the operator $s^K_{\fg}\in \End_{\mK}(\fg)$ of Section \ref{Constr}, for $\fg$ a $\mK$-Lie superalgebra of dimension $m|n$ and $K\in\mN^{m|n}$, defined as
\begin{equation}\label{defeqS}s^K_{\mg}(X):=\sigma\left(\widetilde{\ad}_{X_1}^{\bullet k_1} \bullet \cdots\bullet \widetilde{\ad}_{X_{m+n}}^{\bullet k_{m+n}}\right)X\qquad \text{ for all } X \in \mg,\end{equation}
where~$\{X_i,1\le i \le m\}$ constitutes a basis of $\fg_{\oa}$ and $\{X_{m+j,1\le j\le n}\}$ a basis of $\fg_{\ob}$.
 
\begin{lemma}\label{lemma 1}
For $X \in \mg$ and $Y$ in $S(\mg)$, the following holds
\begin{align} \label{equation lemma 1}
\sigma(XY) = \sum_{K\in \mN^{m|n}} \frac{(-1)^{\abs{K}} }{(\abs{K}+1)K!} s^K_{\mg}(X) \sigma( \partial^K Y).
\end{align} 
\end{lemma}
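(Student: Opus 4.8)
The plan is to prove \eqref{equation lemma 1} by a generating-function argument that reduces the super-algebraic bookkeeping to an essentially commutative computation. Since both sides of \eqref{equation lemma 1} are $\mK$-linear in $Y$, and the symmetric monomials span $S(\mg)$, it suffices to verify the identity on a convenient spanning family, which I would package into a single generating series. The device is to extend scalars by an auxiliary Grassmann algebra: introduce formal parameters $\zeta_a$ of the same parity as $X_a$ and set $\zeta=\sum_{a=1}^{m+n}\zeta_a X_a$, which is an \emph{even} element of the extended Lie superalgebra. The virtue of this is that all super signs are absorbed into the (anti)commuting $\zeta_a$, so the central computation can be run as if $\zeta$ were an ordinary even variable, and the signs of \eqref{sign permutation} re-emerge automatically upon extracting coefficients of the monomials $\zeta^K=\zeta_1^{k_1}\cdots\zeta_{m+n}^{k_{m+n}}$ for $K\in\mN^{m|n}$.

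The core step is to compute $\sigma(X\bullet\exp(\zeta))$, where $\exp(\zeta)=\sum_{q\ge 0}\zeta^{\bullet q}/q!$ lives in a completion of $S(\mg)$. For fixed $q$, symmetrising the product of $X$ with $q$ copies of the even element $\zeta$ and grouping permutations according to the position of $X$ gives
\[
\sigma(X\bullet\zeta^{\bullet q})=\frac{1}{q+1}\sum_{j=0}^{q}\zeta^{\,j}\,X\,\zeta^{\,q-j},
\]
with products now taken in $U(\mg)$. Commuting $X$ to the left with the binomial identity $\zeta^{\,j}X=\sum_{i=0}^{j}\binom{j}{i}\ad_\zeta^{\,i}(X)\,\zeta^{\,j-i}$, valid without extra signs precisely because $\zeta$ is even, and summing over $j$ via the hockey-stick identity $\sum_{j\ge i}\binom{j}{i}=\binom{q+1}{i+1}$, yields, using $\ad_\zeta=-\widetilde{\ad}_\zeta$,
\[
\sigma(X\bullet\zeta^{\bullet q})=\sum_{i=0}^{q}\frac{(-1)^{i}}{i+1}\binom{q}{i}\,\widetilde{\ad}_\zeta^{\,i}(X)\,\zeta^{\,q-i}.
\]
Dividing by $q!$ and summing over $q$ collapses to the exponential form
\[
\sigma(X\bullet\exp(\zeta))=\left(\sum_{i\ge 0}\frac{(-1)^{i}}{(i+1)!}\,\widetilde{\ad}_\zeta^{\,i}(X)\right)\exp(\zeta).
\]

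It then remains to translate this back through the $\zeta_a$. Expanding $\widetilde{\ad}_\zeta=\sum_a\zeta_a\widetilde{\ad}_{X_a}$ and collecting Grassmann monomials gives the identity $\widetilde{\ad}_\zeta^{\,i}(X)=i!\sum_{|K|=i}(\zeta^K/K!)\,s^K_{\mg}(X)$: the (anti)symmetry of $\zeta^K$ forces the operator product $\widetilde{\ad}_{X_{a_1}}\cdots\widetilde{\ad}_{X_{a_i}}$ to appear symmetrised, which is exactly $\sigma$ applied to $\widetilde{\ad}_{X_1}^{\bullet k_1}\bullet\cdots$ as in \eqref{defeqS}. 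Combined with $\tfrac{i!}{(i+1)!}=\tfrac{1}{i+1}$, the displayed bracket becomes $\sum_K\tfrac{(-1)^{|K|}}{(|K|+1)K!}\,s^K_{\mg}(X)\,\zeta^K$. Finally, writing $\exp(\zeta)=\sum_L(\zeta^L/L!)\,X_1^{\bullet l_1}\bullet\cdots\bullet X_{m+n}^{\bullet l_{m+n}}$, one checks directly that $\zeta^K\exp(\zeta)=\partial^K\exp(\zeta)$, since $\partial^K$ lowers the symmetric degree exactly as multiplication by $\zeta^K$ raises the $\zeta$-degree. Substituting this and applying $\sigma$ gives
\[
\sigma(X\bullet\exp(\zeta))=\sum_{K\in\mN^{m|n}}\frac{(-1)^{|K|}}{(|K|+1)K!}\,s^K_{\mg}(X)\,\sigma(\partial^K\exp(\zeta)),
\]
which is \eqref{equation lemma 1} for $Y=\exp(\zeta)$; extracting the coefficient of each $\zeta^L$ recovers the statement for $Y=X_1^{\bullet l_1}\bullet\cdots\bullet X_{m+n}^{\bullet l_{m+n}}$, and these span $S(\mg)$.

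The main obstacle is the sign bookkeeping in the identity $\widetilde{\ad}_\zeta^{\,i}(X)=i!\sum_{|K|=i}(\zeta^K/K!)\,s^K_{\mg}(X)$: one must verify that reordering the anticommuting $\zeta_a$ produces precisely the permutation sign $\abs{\tau}$ of \eqref{sign permutation} entering the definition of $\sigma$, so that the symmetrisation in $\End_{\mK}(\mg)$ matches \eqref{defeqS} term by term. Everything else is either standard combinatorics (hockey-stick, multinomial) or is rendered sign-free by the evenness of $\zeta$. As a fallback, should the Grassmann-envelope identification prove delicate to state cleanly, the same identity can instead be obtained by induction on the degree of $Y$, splitting off one tensor factor and commuting $X$ past it, at the cost of heavier direct sign tracking.
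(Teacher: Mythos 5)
Your argument is correct and rests on the same two pillars as the paper's own proof: adjoining auxiliary (anti)commuting parameters so that the relevant element becomes even, and then commuting $X$ past powers of that even element via the binomial identity $\zeta^{j}X=\sum_{i}\binom{j}{i}\ad_{\zeta}^{i}(X)\,\zeta^{j-i}$. The packaging differs in two ways worth noting. The paper attaches a parameter $t_i$ to each factor $Z_i$ of a fixed monomial $Y=Z_1\cdots Z_{p+q}$ and an extra parameter $t$ to $X$, forms the even element $\kappa(t)=Xt+\sum_i Z_it_i$, and compares the coefficient of $t_1\cdots t_{p+q}$ in $\frac{\partial}{\partial t}\kappa(t)^{p+q+1}|_{t=0}$ computed two ways; you instead attach parameters to the basis vectors, form $\zeta=\sum_a\zeta_aX_a$, and treat all monomials simultaneously through $\exp(\zeta)$, which replaces the paper's fibre-counting over the map $f\mapsto K_f$ by the multinomial expansions of $\widetilde{\ad}_\zeta^{\,i}$ and of $\exp(\zeta)$ --- a mild streamlining (your hockey-stick step is literally the paper's $\sum_{i=s}^{p+q}\binom{i}{s}=\binom{p+q+1}{s+1}$). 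The one point you defer --- that collecting Grassmann monomials in $\widetilde{\ad}_\zeta^{\,i}(X)$ produces exactly the symmetrised operator $s^K_{\mg}(X)$ of \eqref{defeqS} with the sign \eqref{sign permutation} --- is precisely the computation the paper carries out explicitly in the three bullet points establishing $\delta(\tau,f)=s+\abs{\tau}$, so it is a genuine but routine verification rather than a gap; for completeness you should also record the analogous sign checks for the identity $\zeta^K\exp(\zeta)=\partial^K\exp(\zeta)$ and for extracting the coefficient of $\zeta^L$ from both sides, since $s^K_{\mg}(X)$ and $\zeta^L$ can each be odd.
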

\begin{proof}
By linearity it suffices to consider $Y\in S(\mg)$ of the form 
\begin{align} \label{definition alpha}
 Y=X_1^{\alpha_1}X_2^{\alpha_2} \cdots X_{m+n}^{\alpha_{m+n} },
 \end{align}
 for some $\alpha \in \mN^{m|n}$.
  Put $p=\sum_{i=1}^{m} \alpha_i$ and $q= \sum_{i=m+1}^{m+n}\alpha_i$. Then we write $$Y=Z_1Z_2 \cdots Z_{p+q},$$ where the $Z_i\in \fg$ are defined by
 \begin{align*}
Z_1 =Z_2 = \cdots = Z_{\alpha_1}=&X_{1}, \\
 Z_{\alpha_1+1} =Z_{\alpha_1+2} = \cdots= Z_{\alpha_1+\alpha_2}= & X_2, \\
  \vdots & \\ 
Z_{p+q-\alpha_{m+n}+1} = \cdots = Z_{p+q}= & X_{m+n}.
\end{align*}
Remark that $Z_i$ is even for $i \leq p$ and odd for $i> p$.

Since \eqref{equation lemma 1} is also linear in $X,$ we can assume $X$ to be homogeneous. So, let $X$ be a homogeneous element of $\mg$ and define $p+q$ indeterminates $t_i$, where~$t_i$ is even if $i\leq p$ and odd if $i >p.$ Furthermore we define the indeterminate $t$ to be even if $X$ is even and odd if $X$ is odd. 
Consider the supercommutative algebra $T$ generated by $\{t_i, i=1,\cdots,p+q\}$ and $t$. Then we define the following element of $U(\mg) \otimes T$
\begin{align*}
\kappa(t)= X t+ \sum_{i=1}^{p+q} Z_i t_i.
\end{align*}
By construction we have $\abs{\kappa(t)}=0$, therefore $[W,\kappa(t)]= W\kappa(t)-\kappa(t)W$ for all $W \in U(\mg) \otimes T$.

We will calculate $\frac{\partial }{\partial t} \kappa(t)^{p+q+1} |_{t=0} $ in two different ways and then compare the term in $t_1 \cdots t_{p+q}$.

On the one hand, we have for $j \in \mN$
\begin{align*} 
\kappa(t)^j = \sum_{(\sum_{i=0}^{p+q} r_i)=j} \frac{j!}{r_0!r_1!r_2! \cdots r_p!} \;\sigma \left(X^{r_0}Z_1^{r_1}\cdots Z_{p+q}^{r_{p+q}}\right) \; t_{p+q}^{r_{p+q}} \cdots t_1^{r_1} t^{r_0}.
\end{align*}

Setting $j=p+q+1$, we obtain
\begin{align*}
\frac{\partial }{\partial t} \kappa(t)^{p+q+1} |_{t=0}= 
 \sum_{(\sum_{i=1}^{p+q} r_i)=p+q} (-1)^{\abs{X}} \frac{(p+q+1)!}{r_1!r_2! \cdots r_p!} \;\sigma \left(XZ_1^{r_1}\cdots Z_{p+q}^{r_{p+q}}\right) \; t_{p+q}^{r_{p+q}} \cdots t_1^{r_1}.
\end{align*}
Hence the term in $t_1\ldots t_{p+q}$ is given by
\begin{align}  \label{expression with sigma (X)}
(-1)^{\abs{X}}(p+q+1)! \sigma\left(XZ_1\cdots Z_{p+q}\right) \; t_{p+q} \cdots t_1.
\end{align}

On the other hand, we can also calculate $\frac{\partial }{\partial t} \kappa(t)^{p+q+1} |_{t=0}$ using the expression
\begin{align*}
\kappa (t)^i W = \sum_{s=0}^i \binom{i}{s} \ad_{\kappa(t)}^s (W) \; \kappa(t)^{i-s},
\end{align*}
which holds for all $W$ in $U( \mg)  \otimes T$. Using the super Leibniz rule, we find
\begin{align*} 
&\frac{\partial }{\partial t} \kappa(t)^{p+q+1}
 = \sum_{i=0}^{p+q} \kappa(t)^i (-1)^{\abs{X}}X \kappa(t)^{p+q-i} \nonumber
= (-1)^{\abs{X}}\sum_{s=0}^{p+q} \sum_{i=s}^{p+q} \binom{i}{s} \ad_{\kappa(t)}^s (X ) \kappa(t)^{p+q-s} \nonumber \\
&= (-1)^{\abs{X}}\sum_{s=0}^{p+q} \binom{p+q+1}{s+1} \ad_{\kappa(t)}^s (X ) \quad \sum_{\mathclap{(\sum_{i=0}^{p+q} r_i)=p+q-s}}  \frac{(p+q-s)!}{r_0!r_1!r_2! \cdots r_p!} \;\sigma \left(X^{r_0}Z_1^{r_1}\cdots Z_{p+q}^{r_{p+q}}\right) \; t_{p+q}^{r_{p+q}} \cdots t_1^{r_1} t^{r_0}.
\end{align*}
Setting $t=0$, we obtain
\begin{align}\label{other hand}
(-1)^{\abs{X}} \sum_{s=0}^{p+q}   \sum_{(\sum_{i=1}^{p+q} r_i)=p+q-s}  \frac{(p+q+1)!}{(s+1)!r_1!r_2! \cdots r_p!} \;\ad_{\kappa(0)}^s (X ) \; \sigma \left(Z_1^{r_1}\cdots Z_{p+q}^{r_{p+q}}\right) \; t_{p+q}^{r_{p+q}} \cdots t_1^{r_1}.
\end{align}
We will bring all the terms $t_i$ which are still contained in $\ad_{\kappa(0)}^s$ to the right, so that we can compare it to \eqref{expression with sigma (X)}. This will create many minus signs which we will calculate in several steps.
Let $\{ f(1),\ldots, f(s)\} \subset \{1, \ldots, p+q \}$ be a subset which is ordered, i.e. $f(i)<f(j)$ if $i<j$ and let $\tau$ be a permutation of $S_s$. As we will let $\tau$ act on products which are ordered as in Subsection \ref{PrelSec1}, we can use the notation $|\tau|$ of equation \eqref{sign permutation}. Furthermore we will want to manipulate expressions in a way that ignores the relations between the different $Z_i$. Therefore we consider the supersymmetric algebra $\cZ$ generated by even variables $z_i$ for $1\le i\le p$ and odd variables $z_{p+j}$ for $1\le j\le q$. This comes with an algebra morphism $\xi_\alpha:\cZ\to S(\fg)$ defined by $\xi_\alpha(z_i)=Z_i$. Furthermore we introduce $\sigma_\alpha=\sigma\circ\xi_\alpha:\cZ\to U(\fg)$.

\begin{itemize}
\item
Since $Z_jt_j$ is even, we have
\begin{align*}
[Z_{\tau(f(1))}t_{\tau(f(1))}, [\cdots ,[Z_{\tau(f(s))}t_{\tau(f(s))},X] \cdots ]] =(-1)^s[[\cdots [X,Z_{\tau(f(s))}t_{\tau(f(s))}], \cdots],Z_{\tau(f(1))}t_{\tau(f(1))}]\\
=(-1)^s[[\cdots [X,Z_{\tau(f(s))}], \cdots],Z_{\tau(f(1))}] \;\;t_{\tau(f(1))} \cdots  t_{\tau(f(s))}.
\end{align*}
\item
Denote by $\hat{Z}^{f}$ the product $Z_1Z_2\cdots Z_{p+q}$ after all terms in $\{Z_{f(i)}\,|\,i=1,\cdots,s\}$ are omitted and similarly by $\hat{t}^f$ the product $t_{p+q}\cdots t_1$ after removing $\{t_{f(i)}\,|\,i=1,\cdots,s\}$\begin{align*}
&t_{\tau(f(1))} \cdots  t_{\tau(f(s))}\sigma(\hat{Z}^{f}) \; \hat{t}^f \\
&=\sigma\left(\xi_\alpha(\partial_{z_{\tau(f(1))}}\cdots \partial_{z_{\tau(f(s))}} z_{\tau(f(s))} \cdots z_{\tau(f(1))})\hat{Z}^{f}\right)\hat{t}^ft_{\tau(f(1))} \cdots  t_{\tau(f(s))} \\
&=\sigma_\alpha\left(\partial_{z_{\tau(f(1))}}\cdots \partial_{z_{\tau(f(s))}}z_1 \ldots  \ldots z_{p+q}\right) t_{p+q} \cdots t_1.
\end{align*}
\item Finally
\begin{align*}
\partial_{z_{\tau(f(1))}}\cdots \partial_{z_{\tau(f(s))}} = (-1)^{\abs{\tau}} \partial_{z_{f(1)}} \cdots \partial_{z_{f(s)}}.
\end{align*}
\end{itemize}
Combining these three calculations we conclude
\begin{align*}
[Z_{\tau(f(1))}t_{\tau(f(1))}, [\cdots ,[Z_{\tau(f(s))}t_{\tau(f(s))},X] \cdots ]]\;\sigma(\hat{Z}^{f}) \; \hat{t}^f \\
= (-1)^{\delta(\tau,f)} [[\cdots [X,Z_{\tau(f(s))}], \cdots],Z_{\tau(f(1))}]\; \sigma_\alpha(\partial_{z_{f(1)}} \cdots \partial_{z_{f(s)}} z_1 \cdots z_{p+q}) \; t_{p+q}\cdots t_1,
\end{align*}
where~$\delta(\tau,f) = s + \abs{\tau}$.

Therefore the term of $\frac{\partial}{\partial t} \kappa(t)^{p+q+1} |_{t=0}$ in $t_1 \cdots t_{p+q}$  is given by
\begin{align} \label{A}
(-1)^{\abs{X}}\sum_{s=0}^{p+q} \sum\limits_{\substack{ f\\ \abs{f}=s}} \sum_{\tau} (-1)^{\delta(\tau,f)} \frac{(p+q+1)!}{(s+1)!}  [[\cdots [X,Z_{\tau(f(s))}], \cdots],Z_{\tau(f(1))}] \\ \sigma_\alpha(\partial_{z_{f(1)}} \cdots \partial_{z_{f(s)}} z_1 \cdots z_{p+q}) \; t_{p+q}\cdots t_1,\nonumber
\end{align}
where we sum over all ordered subsets $f$ and all possible permutations $\tau \in S_{s}$. 

By construction, \eqref{expression with sigma (X)} and \eqref{A} are identical, which implies
\begin{align}\label{BB}
&\sigma\left(XZ_1\cdots Z_{p+q}\right) =\\
&\sum_{s=0}^{p+q} \sum\limits_{\substack{ f\\ \abs{f}=s}} \sum_{\tau}  \frac{(-1)^{\delta(\tau,f)}}{(s+1)!}  [[\cdots [X,Z_{\tau(f(s))}], \cdots],Z_{\tau(f(1))}]\sigma_\alpha(\partial_{z_{f(1)}} \cdots \partial_{z_{f(s)}} z_1 \cdots z_{p+q}) \nonumber
\end{align}

To write this in the proposed form, we associate with each $f$ the unique $K_f=(k_1, \ldots, k_{m+n}) \in \mN^{m|n}$ which satisfies
$$X_1^{k_1}\cdots X_{m+n}^{k_{m+n}}= Z_{f(1)}\cdots Z_{f(s)},$$
so in particular $\abs{K_f}=s$. This definition implies
\[
\xi_\alpha(\partial_{z_{f(1)}} \cdots \partial_{z_{f(s)}} z_1 \cdots z_{p+q}) = \frac{(\alpha-K_f)!}{\alpha!} \partial^{K_f} Y,
\]
which allows to rewrite equation \eqref{BB} as
\begin{align*}
\sigma\left(XY\right) =\sum_{s=0}^{p+q} \sum\limits_{\substack{ f\\ \abs{f}=s}} \sum_{\tau}  \frac{(-1)^{\delta(\tau,f)}(\alpha-K_f)!}{(|K_f|+1)!\alpha!}  [[\cdots [X,Z_{\tau(f(s))}], \cdots],Z_{\tau(f(1))}]\sigma( \partial^{K_f} Y).
\end{align*}
Introducing the symmetrisation map $\sigma$ then yields
\begin{align*}
\sigma\left(XY\right)&=\sum_{s=0}^{p+q} \sum\limits_{\substack{ f\\ \abs{f}=s}} \frac{(-1)^{|K_f|}(\alpha-K_f)!|K_f|!}{(|K_f|+1)!\alpha!} s^{K_f}_{\fg}(X)\;\sigma( \partial^{K_f} Y).
\end{align*}
It hence remains to interpret the summation in the right-hand side and compare to the one in equation \eqref{equation lemma 1}. Concretely we need to consider the map $q:f\mapsto K_f$. Firstly, this map implies that the summation in the above is not over all $K\in\mN^{m|n}$, but only over $K$ such that $K\le \alpha$. However, when that condition on $K$ is not satisfied we have $\partial^KY=0$. Secondly, the map is not injective. When $K\le \alpha$, the cardinality of $q^{-1}(K)$ is clearly $\alpha! /(K! (\alpha-K)!)$. Hence we obtain precisely equation \eqref{equation lemma 1}.
\end{proof}

\begin{lemma} \label{lemma Lie bracket of sigma}
Let $\mg$ be a  Lie superalgebra with basis $ X_i,$ $1\leq i \leq m+n$ and $Y$ be an element of~$S(\mg)$.
Then
\begin{align} \label{equation Lie bracket of sigma}
[X,\sigma(Y)]=\sigma\left(\sum_{i=1}^{m+n} [X,X_i] \partial^i  Y\right)=\sigma\left(\sum_{|K|=1}s_{\fg}^K(X)\partial^KY\right).
\end{align}
\end{lemma}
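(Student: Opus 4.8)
The second equality in \eqref{equation Lie bracket of sigma} is immediate from the definitions. For $\abs{K}=1$ we have $K=e_i$ (the multi-index with a single $1$ in slot $i$), so that $\partial^K=\partial^i$ and, since symmetrisation is the identity on degree-one elements, $s^{e_i}_\fg(X)=\sigma(\widetilde{\ad}_{X_i})X=\widetilde{\ad}_{X_i}(X)=[X,X_i]$ by \eqref{defeqS}. Hence $\sum_{\abs{K}=1}s^K_\fg(X)\partial^K Y=\sum_{i=1}^{m+n}[X,X_i]\partial^i Y$, and it remains to prove the first equality.

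The plan is to recognise the right-hand side as $\sigma$ applied to a derivation. For homogeneous $X$, let $D_X$ denote the unique super-derivation of $S(\fg)$, of parity $\abs{X}$, extending the adjoint map $\ad_X\colon X_i\mapsto[X,X_i]$ on $\fg$. A short check on a product $X_aX_b$, using supercommutativity of $S(\fg)$ and the super-Leibniz rule for the $\partial^i$, shows that $D_X=\sum_{i=1}^{m+n}[X,X_i]\partial^i$ as operators on $S(\fg)$. Thus the claim \eqref{equation Lie bracket of sigma} is exactly the $\fg$-equivariance identity $[X,\sigma(Y)]=\sigma(D_XY)$, where $\fg$ acts on $U(\fg)$ by the inner derivation $[X,\cdot]$ and on $S(\fg)$ by $D_X$.

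To prove this identity I would argue by the naturality of symmetrisation with respect to derivations. The element $\ad_X\in\mathrm{Der}(\fg)$ extends canonically both to a derivation of $U(\fg)$ and to the derivation $D_X$ of $S(\fg)$; the extension to $U(\fg)$ agrees with $[X,\cdot]$ on the generators $\fg$, and hence, both being derivations, coincides with $[X,\cdot]$ everywhere. It therefore suffices to show that $\sigma$ intertwines the two induced derivations. By linearity one reduces to a monomial $Y=Z_1\bullet\cdots\bullet Z_k$ with $Z_j\in\fg$ homogeneous, expands $\sigma(Y)=\tfrac{1}{k!}\sum_{\tau\in S_k}\varepsilon(\tau)\,Z_{\tau(1)}\cdots Z_{\tau(k)}$ with $\varepsilon(\tau)$ the Koszul sign, applies the super-Leibniz rule to $[X,\,\cdot\,]$ factor by factor, and then regroups the resulting terms according to which tensor factor $Z_m$ receives the bracket. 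Each of the $k$ choices contributes one symmetrised product in which $Z_m$ is replaced by $[X,Z_m]$, so the sum reassembles into $\sum_{m=1}^{k}\pm\,\sigma(Z_1\bullet\cdots\bullet[X,Z_m]\bullet\cdots\bullet Z_k)=\sigma(D_XY)$.

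The main obstacle is purely the super-sign bookkeeping: one must match the Koszul signs $\varepsilon(\tau)$ appearing in $\sigma$, the parity-dependent signs from the super-Leibniz rule for $[X,\cdot]$, and the signs in $D_X=\sum_i[X,X_i]\partial^i$ coming from moving $X$ past the preceding factors. A clean way to keep these under control, mirroring the proof of Lemma~\ref{lemma 1}, is to pass to a generating function: introduce formal parameters $t_j$ of parity $\abs{Z_j}$, form the even element $\kappa=\sum_{j}Z_jt_j$ in $U(\fg)\otimes T$, note that $k!\,\sigma(Y)$ is the coefficient of $t_k\cdots t_1$ in $\kappa^k$, and compute $[X,\kappa^k]=\sum_{l=0}^{k-1}\kappa^l[X,\kappa]\kappa^{k-1-l}$ with $[X,\kappa]=\sum_j[X,Z_j]t_j$. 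Extracting the same multilinear coefficient then yields $k!\,[X,\sigma(Y)]=k!\,\sigma(D_XY)$, with all signs produced automatically by the evenness of $\kappa$.
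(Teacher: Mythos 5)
Your argument is correct, and the combinatorial heart of it --- apply the super-Leibniz rule for $[X,\cdot]$ across each symmetrised product and regroup the terms according to which factor receives the bracket --- is the same as in the paper's proof. Where you genuinely differ is in organisation and in how the signs are controlled. The paper works entirely with explicit permutation sums: it expands both sides of \eqref{equation Lie bracket of sigma} over $S_{p+q}$, introduces the sign functions $\alpha(\tau,i)$, $\beta(\tau,k)$, $\gamma(\tau,k)$, and finishes by checking $\beta(\tau,k)=\gamma(\tau,k)=\alpha(\tau,\tau^{\minus 1}(k))$ by hand. You instead first repackage the right-hand side as $\sigma(D_XY)$ for the super-derivation $D_X=\sum_i[X,X_i]\partial^i$ of $S(\fg)$ extending $\ad_X$ --- a correct identification, since each $[X,X_i]\partial^i$ is a derivation of parity $\abs{X}$ and two derivations agreeing on generators coincide --- which reduces the lemma to the $\ad$-equivariance of the symmetrisation map; you then control the Koszul signs by the generating-function device $\kappa=\sum_jZ_jt_j$ borrowed from the proof of Lemma~\ref{lemma 1}. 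That route is cleaner: $\kappa$ is even, so $[X,\kappa^k]=\sum_{l}\kappa^l[X,\kappa]\kappa^{k-1-l}$ with $[X,\kappa]=\sum_j[X,Z_j]t_j$, and comparing coefficients of $t_k\cdots t_1$ produces all signs automatically, replacing the paper's verification of the three sign functions. The one step you leave implicit --- extracting the coefficient of $t_k\cdots t_1$ from $\sum_l\kappa^l[X,\kappa]\kappa^{k-1-l}$ --- is the same multinomial expansion already carried out in Lemma~\ref{lemma 1} and is routine. Your treatment of the second equality in \eqref{equation Lie bracket of sigma}, via $s^{e_i}_\fg(X)=\widetilde{\ad}_{X_i}(X)=[X,X_i]$ from \eqref{defeqS}, is also correct.
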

\begin{proof}
By linearity we can again assume $X$ to be homogeneous and $Y$ to be of the form $X_1^{\alpha_1}\cdots X_{m+n}^{\alpha_{m+n}}$. We will again write $Y$ as $Z_1\cdots Z_{p+q}$, where the $Z_i$ are defined in the same way as in the proof of Lemma~\ref{lemma 1}.

Assume $X$ to be odd. The case~$X$ even can be shown in a similar way. Starting from the left-hand side of \eqref{equation Lie bracket of sigma}, we get
\begin{align} \label{equation lemma Lie bracket sigma}
[X,\sigma(Y)] &= \frac{1}{(p+q)!} \sum_{\tau \in S_{p+q}} (-1)^{\abs{\tau}} [X,Z_{\tau(1)}Z_{\tau(2)} \cdots Z_{\tau(p+q)}] \nonumber \\
&= \frac{1}{(p+q)!}\sum_{\tau \in S_{p+q}} \sum_{i=1}^{p+q} (-1)^{\alpha(\tau,i)} Z_{\tau(1)} \cdots Z_{\tau(i-1)}[X,Z_{\tau(i)} ]Z_{\tau(i+1)} \cdots Z_{\tau(p+q)},
\end{align}
where\[
\alpha(\tau,i)=\sum_{j=1}^{i-1} [ \tau(j) > p] + \sum_{l=p+1}^{p+q-1} \sum_{j=l+1}^{p+q} [ \tau^{\minus 1}(l) > \tau^{\minus 1} (j)].
\]
We  can rewrite the right-hand side of \eqref{equation Lie bracket of sigma} using the notation $Z_1 \cdots \hat{Z}_k \cdots Z_{p+q}$ for the product of all $Z_i$ without the term $Z_k$ as 
\begin{align*}
& \sigma\left(\sum_{i=1}^{m+n} [X,X_i] \partial^i  Y\right) \\&= \sigma\left(\sum_{k=1}^{p} [X,Z_k] Z_1 \cdots \hat{Z}_k \cdots Z_{p+q} \right)+ \sigma\left(\sum_{k=p+1}^{p+q} (-1)^{k-1-p}[X,Z_k] Z_1 \cdots \hat{Z}_k \cdots Z_{p+q} \right) \\
&= \frac{1}{(p+q)!} \sum_{k=1}^p \sum_{\tau \in S_{p+q}} (-1)^{\beta(\tau,k)} \bar{Z}_{\tau(1)}^{(k)} \cdots \bar{Z}^{(k)}_{\tau(p+q)} + \frac{1}{(p+q)!} \sum_{k=p+1}^{p+q} \sum_{\tau \in S_{p+q}} (-1)^{\gamma(\tau,k)} \bar{Z}_{\tau(1)}^{(k)} \cdots \bar{Z}^{(k)}_{\tau(p+q)}, 
\end{align*}
where 
\begin{itemize}
\item $\bar{Z}^{(k)}_i = \begin{cases} Z_i \qquad \text{ for } i\not=k \\ [X,Z_i] \qquad \text{ for } i=k. \end{cases} $
\item $ \beta(\tau,k) = \sum_{l=p+1}^{p+q} [ \tau^{\minus 1}(l) < \tau^{\minus 1} (k) ] + \sum_{l=p+1}^{p+q-1}\sum_{r=l+1}^{p+q} [ \tau^{\minus 1}(l) > \tau^{\minus 1} (r) ]$ \item $ \gamma(\tau,k) =k-1-p + \sum_{l=p+1, \; l\not= k}^{p+q}  \sum_{r=l+1, \; r \not=k}^{p+q} [ \tau^{\minus 1}(l) > \tau^{\minus 1} (r) ].$
\end{itemize}
One can calculate that $\beta (\tau, k)= \gamma (\tau ,k) = \alpha (\tau,\tau^{\minus 1}(k))$. Rewriting \eqref{equation lemma Lie bracket sigma} as 
\begin{align*} 
[X,\sigma(Y)] 
&= \frac{1}{(p+q)!} \sum_{k=1}^{p+q}\sum_{\tau \in S_{p+q}} (-1)^{\alpha(\tau,\tau^{\minus 1}(k))} Z_{\tau(1)} \cdots [X,Z_k ] \cdots Z_{\tau(p+q)},
\end{align*}
concludes the proof.
 \end{proof}

\begin{lemma} \label{Lemma 3}
For any $K\in\mN^{m|n}$, we have 
\begin{align*}
\sum_{L < K} \frac{ B_{\abs{L}}K!}{(\abs{K-L}+1)(K-L)!L!}     s^L_{\mg}s^{K-L}_{\mg}(X) \; \partial^L \partial^{K-L} Y 
= - B_{\abs{K}} s^K_{\mg}(X) \partial^K (Y).
\end{align*}
\end{lemma}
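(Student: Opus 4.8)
The plan is to disentangle the combinatorics of the \emph{total} degrees $|L|,|K-L|$ from the operator-theoretic content carried by the $s^K_\fg$, so that the outer summation collapses by the scalar Bernoulli recursion \eqref{Bernoulli numbers} while the inner summation is controlled by a single generating-function identity. First I would record that the coefficient factors as
\[
\frac{B_{|L|}K!}{(|K-L|+1)(K-L)!\,L!}=\binom{K}{L}\,\frac{B_{|L|}}{|K-L|+1},\qquad \binom{K}{L}:=\frac{K!}{L!\,(K-L)!},
\]
so that the coefficient depends on $L$ only through $|L|$ and the multi-index binomial. Since $L<K$ forces $|L|\le|K|-1$, grouping the sum according to $a:=|L|$ rewrites the left-hand side as
\[
\sum_{a=0}^{|K|-1}\frac{B_a}{|K|-a+1}\;\sum_{\substack{L\le K\\ |L|=a}}\binom{K}{L}\,s^L_\fg s^{K-L}_\fg(X)\,\partial^L\partial^{K-L}Y .
\]

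Everything then reduces to two facts. The first is a \emph{shuffle identity}, which I will call $(\ast)$:
\[
\sum_{\substack{L\le K\\ |L|=a}}\binom{K}{L}\,s^L_\fg s^{K-L}_\fg(X)\,\partial^L\partial^{K-L}Y\;=\;\binom{|K|}{a}\,s^K_\fg(X)\,\partial^KY .
\]
The second is the scalar recursion $-B_{|K|}=\sum_{a=0}^{|K|-1}\binom{|K|}{a}\frac{B_a}{|K|-a+1}$, which is precisely \eqref{Bernoulli numbers} (one checks directly that this form is valid for every $|K|\ge1$, including $|K|=1$, where it returns $B_1=-1/2$). Granting $(\ast)$, the displayed left-hand side becomes $\big(\sum_{a=0}^{|K|-1}\binom{|K|}{a}\frac{B_a}{|K|-a+1}\big)\,s^K_\fg(X)\,\partial^KY=-B_{|K|}\,s^K_\fg(X)\,\partial^KY$, which is the asserted right-hand side.

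The core of the proof is therefore $(\ast)$, which I would establish by generating functions. Introduce formal supercommuting variables $t_1,\dots,t_{m+n}$ with $|t_i|=|X_i|$ and two even parameters $u,w$, and work in $\End(\fg)\otimes T$ with $T=\mK[u,w][t_1,\dots,t_{m+n}]$. The element $v:=\sum_i \widetilde{\ad}_{X_i}t_i$ is \emph{even}, so its exponentials commute and add exponents, giving $\exp(uv)\exp(wv)=\exp((u+w)v)$. The key identification is
\[
\exp(uv)=\sum_{K\in\mN^{m|n}}\frac{u^{|K|}}{K!}\,s^K_\fg\,t^K,
\]
which follows from $\sigma(v^{\bullet N})=v^N$ for the single even element $v$ together with the definition \eqref{defeqS} of $s^K_\fg$; note that $t_i^2=0$ for odd $i$ automatically restricts to $K\in\mN^{m|n}$. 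Extracting the coefficient of $u^a w^{|K|-a}t^K$ (with $t^K$ in the fixed order) from both sides of $\exp(uv)\exp(wv)=\exp((u+w)v)$ and multiplying by $K!$ yields
\[
\sum_{\substack{L\le K\\ |L|=a}}\binom{K}{L}\,\varepsilon(L,K-L)\,s^L_\fg s^{K-L}_\fg=\binom{|K|}{a}\,s^K_\fg,
\]
where $\varepsilon(L,K-L)$ is the Koszul sign produced by reordering $t^L\,t^{K-L}$ into $t^K$. Since each $\partial^i$ carries the same parity $|X_i|$ as $t_i$, the relation $\partial^L\partial^{K-L}=\varepsilon(L,K-L)\,\partial^K$ holds with the \emph{same} sign; substituting it into the last display produces $(\ast)$.

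The main obstacle is exactly this super-sign bookkeeping: one must verify that the reordering signs appearing on the operator side (from commuting the odd $t_i$ past one another in $\exp(uv)\exp(wv)$, and from pulling the $t_i$ out of $\sigma$) coincide with those produced by reordering the odd partial derivatives $\partial^L\partial^{K-L}$ into $\partial^K$, so that the two signs cancel and $(\ast)$ is sign-free. Conceptually this is forced because $\widetilde{\ad}_{X_i}$, $t_i$ and $\partial^i$ all carry parity $|X_i|$, but making it airtight requires fixing the Koszul conventions with the same care as the sign $|\tau|$ of \eqref{sign permutation} used elsewhere. The remaining ingredients — the factorization of the coefficient, the grouping by $a=|L|$, and the final appeal to the Bernoulli recursion — are then routine.
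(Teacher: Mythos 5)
Your argument is correct and shares its skeleton with the paper's proof: both reduce the lemma to the convolution identity $(\ast)$, which is exactly equation \eqref{last} of the paper once one writes $\partial^L\partial^{K-L}=(-1)^{\gamma_{K,L}}\partial^K$, and then collapse the sum over $a=\abs{L}$ via the scalar Bernoulli recursion (your check that the recursion also covers $\abs{K}=1$ is the right thing to do; note the statement is only meaningful for $K>0$). The genuine difference lies in how $(\ast)$ is proved. The paper argues directly on the expansion of the symmetrisation defining $s^K_{\fg}$: it fixes the first $i$ factors $\widetilde{\ad}$, regroups the remaining terms into an inner $s^{K-L}_{\fg}$, and counts multiplicities --- an elementary but essentially prose argument. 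You instead package the whole family $(s^K_{\fg})_K$ into $\exp(uv)$ with $v=\sum_i\widetilde{\ad}_{X_i}\otimes t_i$ and read $(\ast)$ off from $\exp(uv)\exp(wv)=\exp((u+w)v)$, which makes the shuffle structure conceptual and produces the binomial coefficients automatically. One correction to your setup: with the convention \eqref{sign permutation} and the super tensor product on $\End(\fg)\otimes T$, the coefficient of $u^{\abs{K}}t^K$ in $\exp(uv)$ is $(-1)^{\binom{o_K}{2}}\tfrac{1}{K!}s^K_{\fg}$ rather than $\tfrac{1}{K!}s^K_{\fg}$, where $o_K$ is the number of odd indices occurring in $K$; the extra sign is the constant Koszul cost of pulling all the $t_i$ to the right past the $\widetilde{\ad}_{X_j}$. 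This does not harm the argument: it depends only on $K$, and combined with the sign $(-1)^{o_L o_{K-L}}$ arising from the product $(s^L\otimes t^L)(s^{K-L}\otimes t^{K-L})$ one has $\binom{o_L}{2}+\binom{o_{K-L}}{2}+o_Lo_{K-L}\equiv\binom{o_K}{2}\pmod 2$, so all parasitic signs cancel between the two sides of the functional equation and the extracted identity is precisely $(\ast)$ with $\varepsilon(L,K-L)=(-1)^{\gamma_{K,L}}$. In short, the sign bookkeeping you flag as the main obstacle does close, but only after this correction to your stated identification of $\exp(uv)$; with that amendment your route is a valid, and arguably cleaner, alternative to the paper's term-by-term regrouping.
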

\begin{proof}
For any $K,L\in \mN^{m|n}$ with $L<K$, we define $\gamma_{K,L}\in\mZ_2$ by 
\[
\partial^L \partial^{K-L}= (-1)^{\gamma_{K,L}} \partial^K.
\] 
We claim that for every $i < \abs{K}$, 
\begin{equation}\label{last}
\abs{K}! s^K_{\mg}(X) = \sum_{L < K, \abs{L}=i} \abs{L}!\abs{(K-L)}! \binom{K}{L}(-1)^{\gamma_{K,L}} s^L_{\mg}(s^{K-L}_\mg(X)).
\end{equation}
Indeed, we start from equation \eqref{defeqS} and consider one term in the expansion of the symmetrisation. This term corresponds to $|K|$ consecutive $\widetilde\ad$-operators acting on $X$. We fix the first $i$ operators from the left and now gather all other terms which start with this fixed sequence. This gives, up to an overall constant, the consecutive action of some $s_{\fg}^L$ with $|L|=|K|-i$, followed by the fixed $i$ operators. Now we also consider all terms in the expansion of $s^K_{\mg}(X)$ where the first $i$ of the $\widetilde\ad$-operators correspond to a permutation of the ones we considered earlier. Adding all these together gives a term $s^L_{\mg}(s^{K-L}_\mg(X))$, again up to multiplicative constant. All the terms in $s^K_{\mg}(X)$ that have not yet been considered can also be gathered in such
forms, for some different $L'\in\mN^{m,n}$ with $|L'|=|K|-i$. Keeping track of all constants and signs then yields \eqref{last}.

Now using the definition of the Bernoulli numbers \eqref{Bernoulli numbers} and equation \eqref{last}, we obtain 
\begin{align*}
- B_{\abs{K}} s^K_{\mg}(X) \partial^K (Y) &=
\sum_{i=0}^{\abs{K}-1} \frac{ B_i \abs{K}!}{(\abs{K}-i+1)!i!} s^K_{\mg}(X)\partial^K(Y) \\ &= \sum_{L<K} \frac{ B_{\abs{L}} }{(\abs{K-L}+1)! \abs{L}!}  \abs{L} ! \abs{K-L} !\binom{K}{L} s^L_{\mg}(x)s^{K-L}_\mg(X) \partial^L \partial^{K-L}(Y).
\end{align*}
which proves the lemma.
\end{proof}

\noindent
{\bf Acknowledgment.}
The authors thank Hendrik De Bie for useful discussions.
SB is a PhD Fellow of the Research Foundation - Flanders (FWO). KC is supported by the Research Foundation - Flanders (FWO) and by Australian Research Council Discover-Project Grant DP140103239.

\vspace{-5mm}

\vspace{-2.5mm}

\noindent
SB: Department of Mathematical Analysis, Faculty of Engineering and Architecture, Ghent University, Krijgslaan 281, 9000 Gent, Belgium;
E-mail: {\tt Sigiswald.Barbier@UGent.be} 

\vspace{-1mm}

\noindent
KC: School of Mathematics and Statistics, University of Sydney, NSW 2006, Australia;
E-mail: {\tt kevin.coulembier@sydney.edu.au}

\date{}


\begin{thebibliography}{999999} 


\bibitem[Be]{Berezin}
F.~Berezin.
Several remarks on the associative hull of a Lie algebra.
Funktsional. Anal. i Prilozhen {\bf 1} (1967), no. 2,
1--14.

\bibitem[CK]{Cantarini}
N.~Cantarini, V.~G.~Kac
Classification of linearly compact simple Jordan and generalized Poisson superalgebras.
J. Algebra {\bf313} (2007), no. 1, 100–124. 

\bibitem[CW]{CW}
S.J.~Cheng, W.~Wang.
Dualities and representations of Lie superalgebras. 
Graduate Studies in Mathematics, {\bf144}. American Mathematical Society, Providence, RI, 2012.

\bibitem[Co]{Conze}
N.~Conze.
Alg\`ebres d'op\'erateurs diff\'erentiels et quotients des alg\`ebres enveloppantes. 
Bull. Soc. Math. France {\bf102} (1974), 379--415. 

\bibitem[C]{osp}
K.~Coulembier.
The orthosymplectic superalgebra in harmonic analysis
J. Lie Theory {\bf 23} (2013), 55--83.

\bibitem[CSS]{CSS}
K.~Coulembier, P.~Somberg, V.~Sou\v{c}ek.
Joseph ideals and harmonic analysis for $\mathfrak{osp}(m|2n)$. 
Int. Math. Res. Not. IMRN (2014), no. 15, 4291--4340.

\bibitem[DM]{DM}
P. Deligne, J. W. Morgan.
Notes On Supersymmetry (following Joseph Bernstein), in P. Deligne et. al., eds., Quantum Fields And Strings: A Course For Mathematicians, Vol. 1 (American Mathematical Society, 1999).

\bibitem[Di]{Dib}
H.~Dib.
Fonctions de Bessel sur une alg\`ebre de Jordan.
J. Math. Pures Appl. (9) {\bf69} (1990), no. 4, 403--448.

\bibitem[GS]{GS}
W.~Gan, G.~Savin.
On minimal representations definitions and properties.
Represent. Theory {\bf 9} (2005), 46--93.

\bibitem[FK]{FK}
J.~Faraut, A.~Kor\'anyi. 
Analysis on symmetric cones. The Clarendon Press, Oxford University Press, New York, 1994.

\bibitem[FSS]{dictionary}
L.~Frappat, A.~Sciarrino, P.~Sorba.
Dictionary on Lie algebras and superalgebras.
Academic Press, Inc., San Diego, CA, 2000.

\bibitem[Jo1]{JosephReal}
A.~Joseph. 
Minimal realizations and spectrum generating algebras. 
Comm. Math. Phys. {\bf36} (1974), 325--338.

\bibitem[Jo2]{JosephOrbit}
A.~Joseph. 
The minimal orbit in a simple Lie algebra and its associated maximal ideal. 
Ann. Sci. \'Ecole Norm. Sup. (4) {\bf9} (1976), no. 1, 1--29.

\bibitem[HKM]{BesselOperators}
J.~Hilgert, T.~Kobayashi, J.~M\"{o}llers.
Minimal representations via Bessel operators. 
J. Math. Soc. Japan {\bf 66} (2014), no. 2, 349--414. 

\bibitem[HKM\O ]{HKMO}
J. Hilgert, T. Kobayashi, J. M\"ollers, B. \O rsted.
Fock model and Segal-Bargmann transform for minimal representations of Hermitian Lie groups.
J. Funct. Anal. {\bf263} (2012), no. 11, 3492--3563. 


\bibitem[Ka]{Kac}
V.~G.~Kac.
Classification of simple Z-graded Lie superalgebras and simple Jordan superalgebras.
Comm. Algebra {\bf5} (1977), no. 13, 1375--1400.  


\bibitem[Ko1]{Ko11}
T.~Kobayashi.
Algebraic analysis of minimal representations.
Publ. Res. Inst. Math. Sci. {\bf47} (2011), no. 2, 585--611. 

\bibitem[Ko2]{KoSurvey}
T.~Kobayashi.
Special functions in minimal representations. Perspectives in representation theory, 253--266, 
Contemp. Math., {\bf610}, Amer. Math. Soc., Providence, RI, 2014. 

\bibitem[KM1]{KM1}
T. Kobayashi, G. Mano.
The inversion formula and holomorphic extension of the minimal representation of the conformal group, Harmonic Analysis, Group Representations, Automorphic
Forms and Invariant Theory: In Honour of Roger E. Howe. Singapore University Press and
World Scientific Publishing, 2007, pp. 159-223.

\bibitem[KM2]{KM2}
T. Kobayashi, G. Mano.
 The Schr\"{o}dinger model for the minimal representation of the indefinite orthogonal group O(p,q).
Mem. Amer. Math. Soc. {\bf 212}, (2011), no. 1000.

\bibitem[K\O ]{Opq}
T.~Kobayashi, B.~\O rsted.
Analysis on the minimal representation of O(p,q). III. 
Ultrahyperbolic equations on $R^{p-1,q-1}$. 
Adv. Math. {\bf180} (2003), no. 2, 551--595. 


\bibitem[Kr]{Krutelevich}
S.~V.~Krutelevich.
Simple Jordan superpairs. 
Comm. Algebra {\bf 25} (1997), no. 8, 2635--2657.


\bibitem[M\"o]{Mollers2}
J.~M\"ollers.
Heat kernel analysis for Bessel operators on symmetric cones.
J. Lie Theory {\bf24} (2014), no. 2, 373--396. 



\bibitem[Mu]{Musson}
I.~Musson.
Lie superalgebras and enveloping algebras. Graduate Studies in Mathematics, {\bf131}. American Mathematical Society, Providence, RI, 2012.


\bibitem[Sa]{Sahi}
S.~Sahi.
Explicit Hilbert spaces for certain unipotent representations.
Invent. Math. {\bf110} (1992), no. 2, 409--418.

\bibitem[VdJ]{Joris}
J. Van der Jeugt.
Irreducible representations of the exceptional Lie superalgebras $D(2,1;\alpha)$.
J. Math. Phys. {\bf26} (1985), no. 5, 913--924. 

\bibitem[We]{Weisstein}
E.~Weisstein.
Bernoulli Number. From MathWorld-- A Wolfram Web Resource. http://mathworld.wolfram.com/BernoulliNumber.html 

\end{thebibliography}
\end{document}